\let\uml\"
\theoremstyle{plain}
\newtheorem{lem}[subsection]{Lemma}
\newtheorem{thm}[subsection]{Theorem}
\newtheorem{prop}[subsection]{Proposition}
\theoremstyle{definition}
\newtheorem{defn}[subsection]{Definition}
\newtheorem{rem}[subsection]{Remark}
\newtheorem{notation}[subsection]{Notations}
\theoremstyle{plain} \newtheorem*{thm*}{Theorem}
\newtheoremstyle{TheoremNum}
{8pt}{8pt}              %%% space between body and thm
{\itshape}                      %%% Thm body font
{}                              %%% Indent amount (empty = no indent)
{\bfseries}                     %%% Thm head font
{.}                             %%% Punctuation after thm head
{ 5pt plus 1pt minus 1pt }                             %%% Space after thm head
{\thmname{#1}\thmnote{ \bfseries #3}}%%% Thm head spec
\theoremstyle{TheoremNum}
\newtheorem{thmrep}{Theorem}
\title[Some nontrivial secondary Adams differentials]{Some Nontrivial Secondary Adams Differentials On the Fourth Line}
\author[X. Wang]{Xiangjun Wang}
\address{Department of Mathematics, Nankai University, No.94 Weijin Road, Tianjin 300071, P. R. China}
\email{xjwang@nankai.edu.cn}
\author[Y. Wang]{Yaxing Wang}
\address{Department of Mathematics, Nankai University, No.94 Weijin Road, Tianjin 300071, P. R. China}
\email{yxwangmath@163.com}
\author[Y. Zhang]{Yu Zhang}
\address{Department of Mathematics, Nankai University, No.94 Weijin Road, Tianjin 300071, P. R. China}
\email{zhang.4841@buckeyemail.osu.edu}
\keywords{Stable homotopy of spheres, Adams spectral sequences, algebraic Novikov spectral sequences.}
\subjclass[2010]{55Q45, 55T15, 55T25}
\thanks{The authors are supported by the National Natural Science Foundation of China (No. 12271183).  The third named author is also supported by the National Natural Science Foundation of China (No. 12001474; 12261091).}
\begin{document} 
 
\begin{abstract}  
Let $p \geq 5$ be an odd prime. Using the correspondence between secondary Adams differentials and secondary algebraic Novikov differentials, we compute four families of nontrivial secondary differentials on the fourth line of the Adams spectral sequence. We also recover all secondary differentials on the first three lines of the Adams spectral sequence.
\end{abstract} 
\maketitle
\tableofcontents

%%%% **** The text of the paper starts here **** %%%%

\section{Introduction}
\label{sec:intro}

The Adams spectral sequence (ASS) is one of the most useful tools to compute the stable homotopy groups of the sphere $\pi_*(S)$. The ASS has $E_2$-page $Ext_{\mathcal{A}_{*}}^{*,*} (\mathbb{F}_p, \mathbb{F}_p)$, where $\mathcal{A}_{*}$ is the dual mod $p$ Steenrod algebra. 

In this paper, we always assume $p$ is an odd prime.  Then we have 
$$\mathcal{A}_* = P [\xi_1, \xi_2, \cdots] \otimes E[\tau_0, \tau_1, \tau_2, \cdots]$$
where $P [\xi_1, \xi_2, \cdots]$ is a polynomial algebra with coefficients in $\mathbb{F}_p$, and $E [\tau_0, \tau_1, \tau_2, \cdots]$ is an exterior algebra with coefficients in $\mathbb{F}_p$.

The Adams-Novikov spectral sequence (ANSS) is another useful tool for computing  $\pi_*(S)$. The ANSS has $E_2$-page
$Ext_{BP_*BP}^{*,*}(BP_*, BP_*)$, where $BP$ denotes the Brown-Peterson spectrum. We have 
\begin{equation*}
   BP_*:= \pi_*(BP) = \mathbb{Z}_{(p)}[v_1, v_2, \cdots], \quad  BP_*BP = BP_* [t_1,t_2,\cdots]
\end{equation*}
where $\mathbb{Z}_{(p)}$ denotes the integers localized at $p$.

The Adams-Novikov $E_2$-page can be computed via the
algebraic Novikov spectral sequence (algNSS) \cite{Miller_algANSS,Novikov_methods}.  The $E_2$-page of the algNSS has the form $Ext^{s,t}_{P_*} (\mathbb{F}_p, I^k / I^{k+1})$, where $I$ denotes the ideal $(p, v_1, v_2, \cdots) \subset BP_*$, and $P_* = BP_* BP/I = P[t_1, t_2, \cdots]$ is the $\mathbb{F}_{p}$-coefficient polynomial algebra.  Here, we have re-indexed the pages to align with the notations in Gheorghe-Wang-Xu \cite{Gheorghe_Wang_Xu} and Isaksen-Wang-Xu \cite{Isaksen_Wang_Xu_more_stable_stems}.

The $E_2$-page of the Adams spectral sequence can also be computed via another spectral sequence, called the Cartan-Eilenberg spectral sequence (CESS) \cite{Cartan_Eilenberg, Ravenel_stable_homotopy_groups}.  For odd prime $p$, the $E_2$-page of the CESS coincides with the $E_2$-page of the algNSS.
Then, we have the following diagram of spectral sequences.
\begin{align*}
\xymatrix@C=5em{
Ext_{P_*}^{s,t} (\mathbb{F}_{p}, I^k / I^{k+1}) \ar@{=>}[r]^{CESS} \ar@{=>}[d]_{algNSS} & Ext_{\mathcal{A}_*}^{s+k,t+k} (\mathbb{F}_p , \mathbb{F}_p) \ar@{=>}[d]^{ASS}  \\ 
Ext^{s,t}_{BP_*BP}(BP_*, BP_*) \ar@{=>}[r]^{\quad \quad \quad   ANSS}  & \pi_{t-s} (S) 
}
\end{align*}

In practice, the main difficulty of computing with the ASS is that the Adams differentials $d^{Adams}_r$'s are difficult to be determined in general.  On the other hand, the algebraic Novikov differentials $d^{alg}_r$'s are much easier to be computed.  This is because the entire construction of the algNSS is purely algebraic.  Computing $d^{alg}_r$'s does not require any topological background knowledge.  It turns out that when $r = 2$, there is a direct correspondence between $d^{Adams}_2$'s and $d^{alg}_2$'s.

\begin{thm}[Novikov \cite{Novikov_methods}, Andrews-Miller \cite{Andrews_Miller,Miller_relations}]
\label{thm: miller transition}
Let $z \in Ext_{\mathcal{A}_*}^{s+k,t+k} (\mathbb{F}_p , \mathbb{F}_p)$ be a nontrivial element detected in the CESS by $x \in Ext^{s,t}_{P_*}(\mathbb{F}_{p}, I^k / I^{k+1})$.  Regard $x$ as an element in the algNSS, then the secondary algebraic Novikov differential $d^{alg}_2 (x)$ represents the secondary Adams differential $d^{Adams}_2 (z)$. 
\end{thm}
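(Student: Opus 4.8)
The plan is to deduce the statement from the structure of the commutative square of spectral sequences, by realizing all four of them from a single $BP$-based resolution of the $p$-complete sphere equipped with two filtrations. The homological (cobar) degree of $BP_*BP$ produces the Adams--Novikov filtration, and running the purely algebraic $I$-adic filtration on $BP_*$ yields the algNSS converging to the ANSS $E_2$-page; its $d_2^{alg}$ is, by construction, the leading $I$-adic term of the $BP_*BP$-cobar differential applied to a lift. Filtering instead the classical $H\mathbb{F}_p$-based Adams resolution, the associated graded with respect to the $I$-adic filtration is the CESS, whose $E_2$-page agrees with that of the algNSS under the identification $[\tau_i]\leftrightarrow[v_i]$; this is the top edge of the diagram. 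The numerical relation \emph{Adams filtration} $=$ \emph{ANSS filtration} $+$ \emph{CESS filtration}, already encoded in the bidegree shift $(s,t)\mapsto(s+k,t+k)$, is the backbone: it lets me regard a $d_2^{Adams}$ (which raises Adams filtration by $2$) and a $d_2^{alg}$ (which raises the pair $(s,k)$ by $(1,1)$, hence Adams filtration by $2$) as living in the same place, so that ``represents'' even makes sense.

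Concretely, I would choose a cocycle representative for $z$ in the $E_1$-term of the Adams resolution sitting in $I$-adic filtration $k$ with leading term $x$; the hypothesis that $z$ is detected by $x$ in the CESS is exactly that this leading term is a $d_1^{CESS}$-cycle surviving to the common $E_2$-page. Since $z$ is a priori only an $E_2$-class of the ASS, its representative is not a permanent cycle, and the obstruction to improving it is precisely $d_2^{Adams}(z)$, computed by the connecting map of the resolution. The key is to transport this boundary into the $I$-adic associated graded: using the geometric boundary theorem for the $BP$-based resolution, the leading $I$-adic term of the topological boundary is computed by applying the cobar differential to a lift of $x$ into $\Omega^\bullet(BP_*BP)$ and reading off its leading term, which is by definition $d_2^{alg}(x)$. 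Matching bidegrees then places $d_2^{alg}(x)\in Ext^{s+1}_{P_*}(\mathbb{F}_p, I^{k+1}/I^{k+2})$ exactly at the CESS-detection of $d_2^{Adams}(z)$, giving the claim.

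I expect the crux to be the passage between the \emph{topological} Adams $d_2$ and the \emph{algebraic} leading term, i.e.\ proving that the first nonvanishing differential is detected algebraically. This rests on two points that require care: first, the compatibility of the Adams and algebraic-Novikov filtrations on a common resolution, so that the ANSS-to-ASS comparison shifts filtration by exactly the CESS degree; and second, the verification that at $r=2$ no \emph{crossing} corrections occur, i.e.\ that the hypothesis ``$x$ is a permanent cycle in the CESS detecting $z$'' genuinely prevents higher $I$-adic filtration pieces of the boundary, or earlier differentials, from altering the leading term. Establishing this, together with the correct form of the geometric boundary theorem identifying the connecting map of the resolution with the cobar differential modulo higher $I$-adic filtration, is the main technical obstacle; granting it, the equality of $d_2^{alg}(x)$ with the CESS-detection of $d_2^{Adams}(z)$ is forced by the commutativity of the square.
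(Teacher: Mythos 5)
The paper does not actually prove Theorem \ref{thm: miller transition}: it is stated as a known result and attributed to Novikov and to Andrews--Miller and Miller, so there is no internal proof to compare your attempt against. Judged on its own terms, your outline follows the standard strategy of Miller's ``square of spectral sequences'': realize the algNSS and the CESS as the two spectral sequences attached to a single bifiltered object built from the $BP$-based resolution, use the additivity of filtrations encoded in the bidegree shift $(s,t,k)\mapsto(s+k,t+k)$ so that $d_2^{Adams}$ and $d_2^{alg}$ land in comparable positions, and identify the topological $d_2$ with the leading $I$-adic term of the cobar differential applied to a lift of $x$. That is the correct picture and is consistent with the references the paper cites.

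As written, however, the proposal has a genuine gap, and you flag it yourself: the entire content of the theorem is the assertion that at $r=2$ no crossing corrections occur, i.e.\ that the first algebraic Novikov differential on the detecting class $x$ computes the CESS-detection of $d_2^{Adams}(z)$ without interference from higher $I$-adic filtration pieces of the boundary, from the indeterminacy in the choice of lift, or from earlier differentials. That statement is precisely Miller's comparison theorem for the two spectral sequences of a bifiltered complex (the $d_2$-correspondence of \cite{Miller_relations}), and your write-up defers it as ``the main technical obstacle'' rather than proving it; granting it, the remainder of your argument is bookkeeping, so the proposal restates the theorem rather than establishing it. A secondary imprecision: the CESS is not the associated graded of the $H\mathbb{F}_p$-Adams resolution under an $I$-adic filtration; it is the spectral sequence of the Hopf-algebra extension $P_*\to\mathcal{A}_*\to E_*$, and identifying it with the top edge of the square requires the isomorphism $E_0 BP_*BP\otimes_{E_0 BP_*}\mathbb{F}_p\cong P_*$ together with the matching of comodule structures $q_i\leftrightarrow a_i$, which is the content of Theorem \ref{thm: CESS props}(d) and equations \eqref{eq: structure map of an in CESS} and \eqref{eq: psi map for qn in alg}. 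In short: right road map, but the crux you leave open is exactly the cited theorem.
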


Let $p \geq 5$.  A complete list of generators together with their $d^{Adams}_2 (z)$ has been determined for the first three lines of the Adams $E_2$-page, i.e.  $Ext_{\mathcal{A}_{*}}^{s,t} (\mathbb{F}_p, \mathbb{F}_p)$ with $s = 1, 2, 3$ (see \cite{Aikawa_3_dimensional, Liulevicius_factorization, Miller_Ravenel_Wilson, Shimada_Yamanoshita, Wang_third_line_adams,Zhao_Wang_adams}).  Meanwhile, only partial results are known for the fourth line $Ext_{\mathcal{A}_{*}}^{4,*} (\mathbb{F}_p, \mathbb{F}_p)$ (see, for example, \cite{Zhong_Hong_Zhao}). 

In this paper, we demonstrate a practical computing strategy to determine $d^{Adams}_2$'s by computing their corresponding $d^{alg}_2$'s. We will work on several explicit examples and provide detailed proof.  Our main result is the following.

\begin{thmrep}[\ref{thm: new adams d2 fourth line}]
There are nontrivial secondary Adams differentials given as follows:
\begin{enumerate}
    \item $d_2^{Adams} (h_{4, i} h_{3, i} g_i) = a_0 b_{4,i-1} h_{3, i} g_i$, for $i \geq 1$.
    \item $d_2^{Adams} (h_{4, i} h_{3, i+1} k_{i+2}) = a_0 b_{4,i-1} h_{3, i+1} k_{i+2}$, for $i \geq 1$.
    \item $d_2^{Adams} (h_{4, i} g_i h_{i+3}) = a_0 b_{4,i-1} g_i h_{i+3}$, for $i \geq 1$.
    \item $d_2^{Adams} (h_{3, i} h_{2, i+1} k_{i}) = a_0 b_{3,i-1} h_{2, i+1} k_{i}$, for $i \geq 1$.
\end{enumerate}
\end{thmrep}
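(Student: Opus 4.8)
The plan is to convert each claimed Adams differential into a statement about secondary algebraic Novikov differentials via Theorem \ref{thm: miller transition}, and then to carry out the resulting $d^{alg}_2$ computations at the chain level in the cobar complex of $BP_*BP$. Concretely, for a product such as $z = h_{4,i}h_{3,i}g_i$ I would first fix explicit cobar representatives in $Ext_{P_*}(\mathbb{F}_p,\mathbb{F}_p)$ for each factor, with $h_{n,j}$ represented by $[t_n^{p^j}]$ and $b_{n,j}$ by the usual transpotence $-\frac{1}{p}\sum_{0<\ell<p}\binom{p}{\ell}[t_n^{\ell p^j}\,|\,t_n^{(p-\ell)p^j}]$, together with chosen cocycle representatives for $g_i$ and $k_i$. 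I would then confirm that $z$ is a permanent cycle of the CESS detecting the named class on the fourth line (citing the known structure of lines $1$--$3$ and the partial fourth-line computations), so that Theorem \ref{thm: miller transition} applies and $d^{alg}_2(z)$ computes $d^{Adams}_2(z)$.

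The engine of the computation is a single family of \emph{seed} differentials on the one-dimensional generators, namely
\begin{equation*}
d^{alg}_2(h_{n,j}) = a_0\, b_{n,j-1}, \qquad n\in\{2,3,4\},\ j\geq 1,
\end{equation*}
which I expect to generalize the classical first Adams differential $d^{Adams}_2(h_j) = a_0 b_{j-1}$ on the first line. Granting these, the differentials on the products follow from the Leibniz rule for the multiplicative spectral sequence. For instance,
\begin{equation*}
d^{alg}_2(h_{4,i}h_{3,i}g_i) = a_0 b_{4,i-1}\,h_{3,i}g_i \;-\; h_{4,i}\,a_0 b_{3,i-1}\,g_i \;+\; h_{4,i}h_{3,i}\,d^{alg}_2(g_i),
\end{equation*}
and the claim is exactly that only the first term survives. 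I would therefore need to show that $g_i$ (respectively $k_i$, and the remaining one-dimensional factors $h_{i+3}$ and $h_{2,i+1}$) is a $d^{alg}_2$-cycle, and that the ``cross terms'' such as $h_{4,i}\,a_0 b_{3,i-1}\,g_i$ vanish in $Ext_{P_*}(\mathbb{F}_p, I^{k+1}/I^{k+2})$ on account of relations among the $h_{n,j}$ and $b_{n,j}$; the same bookkeeping handles families (2)--(4), with the leading term $a_0 b_{3,i-1}(\cdots)$ in family (4) coming from the factor $h_{3,i}$.

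The hard part is establishing the seed differentials $d^{alg}_2(h_{n,j}) = a_0 b_{n,j-1}$ by explicit computation. I would lift the cocycle $[t_n^{p^j}]$ from the cobar complex of $P_*$ to that of $BP_*BP$, apply the cobar differential using Ravenel's formulas for the coproduct on $t_n$ and the right unit $\eta_R$, and then track the $I$-adic filtration: the reduction of $[t_n^{p^j}]$ is a cocycle for the internal differential, its primary $I$-adic correction accounts for the lower filtration, and the next $I$-adic layer must be rewritten in transpotence form to be identified with $a_0 b_{n,j-1}$. The genuine obstacle is twofold: controlling these coproduct expansions modulo $I^{k+2}$ for $n=3,4$, which is substantially more involved than the $n=1$ case because $\psi(t_n)$ and $\eta_R(v_n)$ acquire many more terms, and proving the vanishing of all unwanted Leibniz cross terms, which rests on a careful analysis of the relevant relations in $Ext_{P_*}(\mathbb{F}_p,\mathbb{F}_p)$ together with a dimension count showing that the target groups $Ext_{P_*}(\mathbb{F}_p, I^{k+1}/I^{k+2})$ contain the named classes nontrivially. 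Establishing this nontriviality of the targets is what ultimately certifies that the four families of Adams differentials are themselves nonzero.
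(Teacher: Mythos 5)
Your overall frame (reduce to $d_2^{alg}$ via Theorem \ref{thm: miller transition}, then compute in a cobar complex) agrees with the paper, but the engine of your argument --- seed differentials $d_2^{alg}(h_{n,j}) = a_0 b_{n,j-1}$ for $n \in \{2,3,4\}$ followed by the Leibniz rule --- does not work, and the paper explicitly flags why. For $n \geq 2$ the element $h_{n,j} = [t_n^{p^j}]$ is not a cocycle in $\Omega_{P_*}(\mathbb{F}_p)$: by Lemma \ref{lem: compute d(tkpi)} its cobar differential is $\sum_{k=1}^{n-1} t_{n-k}^{p^j}\otimes t_k^{p^{n-k+j}}$, which is nonzero modulo $I$. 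So $h_{n,j}$ is only a May $E_1$-generator, not a class in $Ext^1_{P_*}(\mathbb{F}_p,\mathbb{F}_p)$, and your seed differential is not a well-formed statement in the algebraic Novikov (or Adams) spectral sequence; likewise ``lifting the cocycle $[t_n^{p^j}]$'' has no meaning for $n\geq 2$. Correspondingly, the four classes in the theorem are indecomposable: $Ext^{1,*}_{\mathcal{A}_*}$ contains only $a_0$ and the $h_{1,j}$, and none of $h_{4,i}h_{3,i}$, $h_{4,i}g_i$, $h_{3,i}h_{2,i+1}$ appears among the second- or third-line generators, so there is no factorization of $h_{4,i}h_{3,i}g_i$ (or of the other three families) into genuine Ext classes to which a Leibniz rule could be applied. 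The displayed expansion of $d_2^{alg}(h_{4,i}h_{3,i}g_i)$ and the ensuing cross-term bookkeeping therefore have no meaning at the level of Ext.

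What the paper does instead, and what your proposal is missing, is a chain-level argument on the whole indecomposable class: one chooses a cocycle representative $\tilde{x} = t_4^{p^i}\otimes t_3^{p^i}\otimes t_2^{p^i}\otimes t_1^{p^i} - \sum_r y_r$ in $\Omega^4_{BP_*BP}(BP_*)$, where the correction terms $y_r$ (forced precisely by the failure of the leading monomial to be a cocycle) all have May filtration strictly less than $16$; one then shows that every $I$-adic degree-one summand of $d(y_r)$ has May filtration strictly below $M(p\,b_{4,i-1}\otimes t_3^{p^i}\otimes t_2^{p^i}\otimes t_1^{p^i}) = 7p+10$, so that $q_0 b_{4,i-1}\otimes t_3^{p^i}\otimes t_2^{p^i}\otimes t_1^{p^i}$ survives as the May leading term of $d_2^{alg}(x)$. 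Controlling those unknown corrections is the actual content of the proof, and your proposal never confronts it. Your closing remark on nontriviality is also too weak as stated: one needs not merely that the target Ext group is nonzero, but that the specific class $a_0 b_{4,i-1}h_{3,i}g_i$ is nonzero in $Ext^{6,*}_{\mathcal{A}_*}$; the paper gets this from a degree inspection in the May spectral sequence showing that the only candidate source in the relevant tridegree has lower May filtration, so the class cannot be hit by any May differential.
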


\begin{rem}
    Here, we follow the conventions of \cite{Wang_third_line_adams, Zhong_Hong_Zhao} to name Adams $E_2$-page elements by their May spectral sequence (MSS) representatives, compare with Table \ref{table: first and second line basis} and Table \ref{table: third line basis}.  We would like to comment more explicitly on the indeterminacy of these classes. For example, the result of (1) should be interpreted as follows.  If an element $x \in Ext_{\mathcal{A}_{*}}^{4,*} (\mathbb{F}_p, \mathbb{F}_p)$  has MSS representative $h_{4, i} h_{3, i} g_i:= h_{4, i} h_{3, i} h_{2, i} h_{1, i}$, then its secondary Adams differential $d_2^{Adams} (x)$ has MSS representative $a_0 b_{4,i-1} h_{3, i} g_i:= a_0 b_{4,i-1} h_{3, i} h_{2, i} h_{1, i}$. More details of the MSS are reviewed in Section \ref{sec: spectral sequence}.
\end{rem}

It is straightforward to verify that these four families of elements are indecomposable, i.e., they can not be written as products of elements from the first three lines. Consequently, one can not deduce the differentials simply via Leibniz rule.

From our point of view, the practical computational strategy here is possibly more interesting than the result itself.  To further demonstrate this, in Section \ref{sec: first three lines}, we use the same strategy to recover all secondary Adams differentials on the first three lines.

Previously, the nontrivial Adams differentials on the third line were computed in \cite{Wang_third_line_adams} using matrix Massey products \cite{May_massey_product}.  Comparatively, our computation has the following advantages: (i) Our computations can be easily adapted to analyze other $d^{Adams}_2$'s of interest.   On the contrary, the matrix Massey product method could fail when the relevant indeterminacy is nontrivial; (ii) Our computations of the algebraic Novikov differentials are routine and purely algebraic.  Such computations are comparatively more straightforward than the previous ones using matrix Massey products.

\subsection*{Organization of the paper} 
In Section \ref{sec: hopf algebroid}, we review the algebraic structures and constructions related to Hopf algebroids. These structures are fundamental to later computations.
In Section \ref{sec: spectral sequence}, we discuss several spectral sequences we use in this paper, including the algNSS, the CESS, and the May spectral sequence.
In Section \ref{sec: fourth line}, we compute relevant algebraic Novikov differentials and prove Theorem \ref{thm: new adams d2 fourth line}. 
In Section \ref{sec: first three lines}, we use the same computational strategy to recover the secondary Adams differentials on the first three lines.

\subsection*{Acknowledgments}
We would like to thank the anonymous referee for the detailed suggestions.  The third named author would also like to thank Zhilei Zhang for helpful discussions.  All authors contribute equally.

\section{Hopf algebroids} 
\label{sec: hopf algebroid}

In this section, we review the definition as well as two important examples of Hopf algebroids.  We will also recall the associated cobar complex construction.

\begin{defn}[\cite{Ravenel_stable_homotopy_groups} Definition A1.1.1]
A \textit{Hopf algebroid} over a commutative ring $K$ is a pair $(A, \Gamma)$ of commutative $K$-algebras with the following structure maps
\begin{align*}
\text{left unit map } & \eta_{L} :A\to \Gamma \\
\text{right unit map } & \eta_{R}  :A\to \Gamma \\
\text{coproduct map } & \Delta  :\Gamma \to \Gamma \otimes _{A}\Gamma  \\
\text{counit map } & \varepsilon  :\Gamma \to A \\
\text{conjugation map } & c  :\Gamma \to \Gamma 
\end{align*}
such that for any other commutative $K$-algebra $B$, the two sets of $K$-homomorphisms $\text{Hom}_K (A, B)$ and $\text{Hom}_K (\Gamma, B)$ are the objects and morphisms of a groupoid.
\end{defn}

\subsection{The Hopf algebroid $(BP_*, BP_*BP)$}

An important example of Hopf algebroids is $(BP_*, BP_*BP)$ \cite{Hazewinkel,Miller_Ravenel_Wilson,Ravenel_stable_homotopy_groups}.
Recall that we have 
\begin{equation}
\label{eq: BP generators}
    BP_*:= \pi_*(BP) = \mathbb{Z}_{(p)}[v_1, v_2, \cdots], \quad  BP_*BP = BP_* [t_1,t_2,\cdots]
\end{equation}
We also have 
\begin{equation}
    H_*(BP) = \mathbb{Z}_{(p)}[m_1, m_2, \cdots]
\end{equation}
where $|v_n| = |t_n| = |m_n| = 2(p^n - 1)$. 

\begin{notation}
Throughout this paper, we denote $v_0 = p$, and $m_0 = t_0 = 1$. 
\end{notation}

The Hurewicz map induces an embedding
\begin{equation}
\label{eq: compare vn mn}
    \begin{split}
        i: BP_* & \to H_*(BP)  \\
        v_n & \mapsto p m_n - \sum_{i = 1}^{n-1} v_{n-i}^{p^i} m_i \\
    \end{split}    
\end{equation}
We can describe the structure maps of the Hopf algebroid $(BP_*, BP_*BP)$ as follows.

The left unit and right unit maps $\eta_L, \eta_R: BP_* \to BP_*BP$ are determined by
\begin{equation}
\label{eq: BP left unit}
    \eta_L(v_n) = v_n
\end{equation}
\begin{equation}
\label{eq: BP right unit wrt m}
    \eta_R(m_n) = \sum_{i+j=n} m_i t_j^{p^i}
\end{equation}

The coproduct map $\Delta: BP_*BP \to BP_*BP \otimes_{BP_*} BP_*BP$ is determined by 
\begin{equation}
\label{eq: BP coprod wrt m}
    \sum_{i+j=n} m_i(\Delta t_j)^{p^i} = \sum_{i+j+k=n} m_i t_j^{p^i} \otimes t_k^{p^{i+j}}
\end{equation}

The counit map $\varepsilon: BP_*BP \to BP_*$ is determined by 
\begin{equation}
    \varepsilon(v_n) = v_n, \quad \varepsilon(t_n) = 0.
\end{equation}

The conjugation map $c: BP_*BP \to BP_*BP$ is determined by 
\begin{equation}
    \sum_{i+j+k=n} m_i t_j^{p^i} c(t_k)^{p^{i+j}} = m_n.
\end{equation}

In practice, it is more convenient to work with $\eta_R(v_n)$ instead of $\eta_R(m_n)$. 

Let $I$ denote the ideal $(p, v_1, v_2, \cdots) \subset BP_*$.  Then $I$ is an invariant ideal as a $BP_* BP$-comodule, in other words, we have $\eta_L(I) \cdot BP_* BP = BP_* BP \cdot \eta_R(I)$.  For $k \geq 0$, we let $I^k \cdot BP_* BP$ denote $\eta_L(I^k) \cdot BP_* BP = BP_* BP \cdot \eta_R(I^k)$.

We have the following formulas.

\begin{prop}
\label{prop: BP right unit wrt v}
Let $n \geq 0$. The right unit map $\eta_R: BP_* \to BP_*BP$ satisfies
\begin{equation}
\label{eq: BP right unit wrt v}
    \eta_R(v_n) \equiv \sum_{i=0}^n v_i t_{n-i}^{p^i}  \quad \text{mod} ~ I^p \cdot BP_* BP
\end{equation}
\end{prop}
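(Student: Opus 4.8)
The plan is to derive the congruence for $\eta_R(v_n)$ modulo $I^p \cdot BP_*BP$ from the exact expression for $\eta_R(m_n)$ given in \eqref{eq: BP right unit wrt m}, by comparing the two generating systems via the Hurewicz embedding \eqref{eq: compare vn mn}. The key observation is that the relation \eqref{eq: compare vn mn} expresses $p\, m_n$ as $v_n$ plus a correction term $\sum_{i=1}^{n-1} v_{n-i}^{p^i} m_i$, and every term in this correction is divisible by a positive power of a $v_j$; raising to the $p^i$-th power for $i \geq 1$ will force high $I$-adic divisibility. So I expect the whole argument to reduce to a careful bookkeeping of $I$-adic valuations.

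First I would apply $\eta_R$ to the defining relation \eqref{eq: compare vn mn}. Using $\eta_L(v_n) = v_n$ on the left-hand side and the fact that $\eta_R$ is a ring map, I would get an expression for $\eta_R(v_n)$ in terms of $\eta_R(m_n)$, $\eta_R(v_{n-i})$, and $\eta_R(m_i)$. Then I would substitute the exact formula \eqref{eq: BP right unit wrt m}, namely $\eta_R(m_n) = \sum_{i+j=n} m_i t_j^{p^i}$, for each occurring $\eta_R(m_k)$. The goal is to solve recursively for $\eta_R(v_n)$ and show that modulo $I^p \cdot BP_*BP$ all the terms collapse to $\sum_{i=0}^{n} v_i t_{n-i}^{p^i}$. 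I would proceed by induction on $n$: the base case $n=0$ reads $\eta_R(v_0) = \eta_R(p) = p = v_0$, which matches, and for the inductive step I would assume the congruence for all indices below $n$.

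The heart of the computation is tracking which contributions are absorbed into $I^p \cdot BP_*BP$. The correction terms in \eqref{eq: compare vn mn} carry factors $v_{n-i}^{p^i}$ with $i \geq 1$, so each such factor already lies in $I^{p^i} \subseteq I^p$; after applying $\eta_R$ and using that $\eta_R(I) \cdot BP_*BP = I \cdot BP_*BP$ (invariance of the ideal, as noted in the excerpt), these contributions land in $I^p \cdot BP_*BP$ and may be discarded. Similarly, in the expansion of $\eta_R(m_n) = \sum_{i+j=n} m_i t_j^{p^i}$ I would use \eqref{eq: compare vn mn} again to rewrite each $m_i$ in terms of $v$'s, keeping only the leading term $m_i \equiv v_i/p + \cdots$ and checking that the powers $t_j^{p^i}$ together with the $I$-adic order of the $v$-coefficients leave exactly the sum $\sum_{i=0}^{n} v_i t_{n-i}^{p^i}$ survive modulo $I^p$.

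The main obstacle I anticipate is the bookkeeping around the prime $p$ itself: since $p = v_0 \in I$, dividing by $p$ (as implicitly happens when passing between the $v_n$ and $m_n$ descriptions) interacts delicately with the $I$-adic filtration, and one must verify that no spurious factor of $p^{-1}$ survives to lower the apparent $I$-adic order of a term. Making the induction hypothesis precise enough to control these mixed $p$- and $v_j$-divisibilities — rather than the naive term-by-term estimate — is where the care is needed. A clean way to handle this is to work entirely with the integral formula \eqref{eq: compare vn mn} and observe that every monomial appearing in the difference $\eta_R(v_n) - \sum_{i=0}^{n} v_i t_{n-i}^{p^i}$ is manifestly divisible, after the substitutions, by a product of $v_j$'s whose total $I$-adic order is at least $p$.
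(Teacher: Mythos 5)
Your proposal is correct and follows essentially the same route as the paper: apply $\eta_R$ to the relation \eqref{eq: compare vn mn}, discard the correction terms $\eta_R(v_{n-i})^{p^i}\eta_R(m_i)$ (which lie in $I^{p^i}\cdot BP_*BP \subseteq I^p\cdot BP_*BP$ for $i\geq 1$), substitute \eqref{eq: BP right unit wrt m}, and replace each $p\,m_i$ by $v_i$ modulo $I^p$. Your worry about dividing by $p$ does not actually materialize, since (as your last sentence anticipates, and as the paper does) one works with $p\,\eta_R(m_n)=\sum_i p\,m_i\,t_{n-i}^{p^i}$ and only ever uses the integral congruence $p\,m_i\equiv v_i$.
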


\begin{proof}
We prove by induction on $n$.  The case for $n = 0$ is trivial.  Now suppose \eqref{eq: BP right unit wrt v} is true for $0 \leq i \leq n-1$.  Then, in particular, $\eta_R(v_i) \in I \cdot BP_* BP$ for $0 \leq i \leq n-1$. Note \eqref{eq: compare vn mn} implies 
\begin{equation}
\label{eq: vn equiv pmn}
    v_n \equiv p m_n \quad \text{mod} ~ I^p H_*(BP)
\end{equation}
for $n \geq 0$.  
Then, direct computation shows
\begin{equation}
    \begin{split}
        \eta_R(v_n) =  & ~ p ~ \eta_R(m_n) - \sum_{i = 1}^{n-1} \eta_R(v_{n-i})^{p^i} \eta_R(m_i) \quad (\text{by} ~ \eqref{eq: compare vn mn}) \\
        \equiv & ~ p ~ \eta_R(m_n) \quad \text{mod} ~ I^p \cdot BP_* BP \\
        \equiv & ~ p \sum_{i=0}^n m_i t_{n-i}^{p^i} \quad \text{mod} ~ I^p \cdot BP_* BP \quad (\text{by} ~ \eqref{eq: BP right unit wrt m})\\
        \equiv & ~ \sum_{i=0}^n v_i t_{n-i}^{p^i} \quad \text{mod} ~ I^p \cdot BP_* BP \quad (\text{by} ~ \eqref{eq: vn equiv pmn})\\
    \end{split}    
\end{equation}
\end{proof}

Similarly, we could obtain the following formulas for $\Delta(t_n)$.

\begin{prop}
\label{prop: BP coprod wrt v}
For $n \geq 0$, we have
\begin{equation}
\label{eq: BP coprod wrt v}
    \Delta(t_n) = \sum_{k=0}^n t_{n-k} \otimes t_{k}^{p^{n-k}} - \sum_{i=1}^{n-1} v_i b_{n-i, i-1}  \quad \text{mod} ~ I^2  \cdot BP_* BP  \otimes_{BP_*} BP_*BP
\end{equation}
where we denote
\begin{equation}
\label{eq: BP defn bij}
    b_{i,j} = \frac{1}{p} [ (\sum_{k=0}^i t_{i-k} \otimes t_{k}^{p^{i-k}})^{p^{j+1}} - \sum_{k=0}^i t_{i-k}^{p^{j+1}} \otimes t_{k}^{p^{i-k+j+1}}]
\end{equation}
for $i \geq 1$, $j \ge 0$.
\end{prop}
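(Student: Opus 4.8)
The plan is to induct on $n$, reducing the defining relation \eqref{eq: BP coprod wrt m} for the coproduct modulo $I^2$ in exact parallel with the proof of Proposition \ref{prop: BP right unit wrt v}. The cases $n = 0$ and $n = 1$ follow immediately from \eqref{eq: BP coprod wrt m}, so I would assume \eqref{eq: BP coprod wrt v} holds for all indices smaller than $n$. First I would isolate the $i = 0$ summand on the left-hand side of \eqref{eq: BP coprod wrt m} and solve for $\Delta(t_n)$, obtaining
\[
\Delta(t_n) = \sum_{j+k=n} t_j \otimes t_k^{p^j} + \sum_{i=1}^{n} m_i \Big[ \sum_{j+k=n-i} t_j^{p^i} \otimes t_k^{p^{i+j}} - (\Delta t_{n-i})^{p^i} \Big].
\]
The first sum is precisely the leading term of \eqref{eq: BP coprod wrt v}, so the entire content of the proposition is to show that, modulo $I^2$, the $i$-th bracketed expression contributes exactly $-v_i b_{n-i,i-1}$.

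Second, I would feed the inductive hypothesis into each bracket. Writing $A_i := \sum_{j+k=n-i} t_j \otimes t_k^{p^j}$ for the leading part of $\Delta(t_{n-i})$, the hypothesis says $\Delta(t_{n-i}) = A_i + C_i$ with the correction $C_i$ lying in $I \cdot BP_* BP \otimes_{BP_*} BP_* BP$. Raising to the $p^i$-th power and using that $\binom{p^i}{r}$ is divisible by $p$ for $0 < r < p^i$, together with $I^{p^i} \subseteq I^2$, every cross term of $(A_i + C_i)^{p^i}$ is seen to lie in $I^2$; hence $(\Delta t_{n-i})^{p^i} \equiv A_i^{p^i} \pmod{I^2}$. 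The key observation is that the definition \eqref{eq: BP defn bij} is engineered to match the resulting difference exactly: comparing indices (with first subscript $n-i$ and $j+1 = i$), one checks $\sum_{j+k=n-i} t_j^{p^i} \otimes t_k^{p^{i+j}} - A_i^{p^i} = -p\, b_{n-i,i-1}$, so the factor $\tfrac{1}{p}$ in \eqref{eq: BP defn bij} cancels the $p$ produced here.

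Finally, I would multiply by $m_i$ and invoke $p m_i \equiv v_i$, which is \eqref{eq: vn equiv pmn} coming from \eqref{eq: compare vn mn}, to turn $m_i \cdot (-p\, b_{n-i,i-1})$ into $-v_i b_{n-i,i-1}$ modulo $I^p \subseteq I^2$. The $i = n$ term drops out because its bracket vanishes (both summands reduce to $1 \otimes 1$), so the surviving range is $1 \le i \le n-1$, exactly as in \eqref{eq: BP coprod wrt v}.

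I expect the main obstacle to be the bookkeeping in this last step rather than any single identity. Since the $m_i$ with $i \geq 1$ do not lie in $BP_*$, one must check that every residual term involving $m_i$ genuinely lands in $I^2 \cdot BP_* BP \otimes_{BP_*} BP_* BP$: the benign terms are those carrying an explicit factor of $p$, so that $p m_i \equiv v_i$ absorbs the $m_i$ into $BP_*$ and leaves a second factor from $I$, and one must confirm that the remaining corrections coming from $(\Delta t_{n-i})^{p^i}$ are all of this form. Tracking these factors of $p$ against the single $\tfrac{1}{p}$ in the definition of $b_{n-i,i-1}$ is the delicate point, and it is exactly what forces the surviving $-v_i b_{n-i,i-1}$ term to sit in filtration $I^1$ while pushing everything else into $I^2$.
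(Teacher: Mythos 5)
Your proposal is correct and follows essentially the same route as the paper: induct on $n$, peel off the $i=0$ term of the defining relation \eqref{eq: BP coprod wrt m}, substitute the inductive hypothesis into $(\Delta t_{n-i})^{p^i}$, recognize the resulting difference as $-p\,b_{n-i,i-1}$ via \eqref{eq: BP defn bij}, and convert $p m_i$ to $v_i$ using \eqref{eq: vn equiv pmn}. The only difference is presentational — you make explicit the divisibility of $\binom{p^i}{r}$ and the vanishing of the $i=n$ bracket, which the paper leaves implicit.
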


\begin{proof}
We prove by induction on $n$.  The case for $n = 0$ is trivial.  Now suppose \eqref{eq: BP coprod wrt v} is true for $0 \leq i \leq n-1$. Then, direct computation shows 
\begin{equation}
    \begin{split}
        \Delta(t_n) = & ~ \sum_{i+j=n} m_i(\Delta t_j)^{p^i} - \sum_{i=1}^{n} m_i(\Delta t_{n-i})^{p^i} \\
        = & ~ \sum_{i+j+k=n} m_i t_j^{p^i} \otimes t_k^{p^{i+j}} - \sum_{i=1}^{n} m_i(\Delta t_{n-i})^{p^i} \\
        = & ~ \sum_{k=0}^n t_{n-k} \otimes t_{k}^{p^{n-k}} + \sum_{i=1}^{n} m_i ( \sum_{k=0}^{n-i} t_{n-i-k}^{p^i} \otimes t_{k}^{p^{n-k}} ) - \sum_{i=1}^{n} m_i(\Delta t_{n-i})^{p^i} \\
        = & ~ \sum_{k=0}^n t_{n-k} \otimes t_{k}^{p^{n-k}} - \sum_{i=1}^{n} m_i [(\Delta t_{n-i})^{p^i} - \sum_{k=0}^{n-i} t_{n-i-k}^{p^i} \otimes t_{k}^{p^{n-k}}] \\
    \end{split}    
\end{equation}
Modulo $I^2 \cdot BP_* BP  \otimes_{BP_*} BP_*BP$, we have
\begin{equation*}
    \begin{split}
        & ~ \sum_{i=1}^{n} m_i [(\Delta t_{n-i})^{p^i} - \sum_{k=0}^{n-i} t_{n-i-k}^{p^i} \otimes t_{k}^{p^{n-k}}] \\
        \equiv & ~  \sum_{i=1}^{n} m_i [(\sum_{k=0}^{n-i} t_{n-i-k} \otimes t_{k}^{p^{n-i-k}})^{p^i} - \sum_{k=0}^{n-i} t_{n-i-k}^{p^i} \otimes t_{k}^{p^{n-k}}] \\
        \equiv & ~  \sum_{i=1}^{n-1} p m_i \cdot \frac{1}{p} [(\sum_{k=0}^{n-i} t_{n-i-k} \otimes t_{k}^{p^{n-i-k}})^{p^i} - \sum_{k=0}^{n-i} t_{n-i-k}^{p^i} \otimes t_{k}^{p^{n-k}}] \\
        \equiv & ~  \sum_{i=1}^{n-1} v_i b_{n-i, i-1} \\
    \end{split}    
\end{equation*}
This completes the proof.

\end{proof}

\subsection{The dual Steenrod algebra $\mathcal{A}_*$}

The Steenrod algebra provides another important example of Hopf algebroids.

Let $\mathcal{A}_*$ denote the dual mod $p$ Steenrod algebra for an odd prime $p$, we have \cite{Milnor_dual_Steenrod}
\begin{equation}
\label{eq: dual steenrod algebra milnor}
    \mathcal{A}_* = P [\xi_1, \xi_2, \cdots] \otimes E[\tau_0, \tau_1, \tau_2, \cdots]
\end{equation}
as an algebra, where $P[\xi_1, \xi_2, \cdots]$ is a polynomial algebra with coefficients in $\mathbb{F}_p$,  $E[\tau_0, \tau_1, \tau_2, \cdots]$ is an exterior algebra with coefficients in $\mathbb{F}_p$.
For the internal degrees, we have $|\xi_n| = 2(p^n - 1), |\tau_n| = 2p^n - 1$.  We also denote $\xi_0 = 1$.

One can show $\mathcal{A}_*$ is a Hopf algebra over $\mathbb{F}_p$.  In particular, $(\mathbb{F}_p, \mathcal{A}_*)$ has a Hopf algebroid structure.  We can describe the structure maps as follows \cite{Milnor_dual_Steenrod}.

The left unit $\eta_{L}: \mathbb{F}_p \to \mathcal{A}_*$, right unit $\eta_{R}: \mathbb{F}_p \to \mathcal{A}_*$, and counit $\epsilon: \mathcal{A}_* \to \mathbb{F}_p$ maps are all isomorphisms in dimension 0.

On generators, the coproduct $\Delta: \mathcal{A}_* \rightarrow \mathcal{A}_* \otimes \mathcal{A}_*$ is given by:
\begin{equation}
\label{eq: dual steenrod coproduct milnor}
    \Delta \xi_n = \sum_{i=0}^{n} \xi^{p^i}_{n-i} \otimes \xi_i, 
\quad
    \Delta \tau_n = \tau_n \otimes 1 + \sum_{i=0}^{n} \xi^{p^i}_{n-i} \otimes \tau_i
\end{equation}

The conjugation map $c: \mathcal{A}_* \rightarrow \mathcal{A}_*$ is an algebra map given recursively by 
\begin{align}
\label{eq: conjugate of steedrod first part}
   & c(\xi_0) = 1, \quad \sum_{i=0}^{n} \xi^{p^i}_{n-i} c(\xi_i) = 0, n > 0,\\
   & \quad  \tau_n + \sum_{i=0}^{n} \xi^{p^i}_{n-i} c(\tau_i) = 0, n \geq 0.
\end{align}

%\begin{equation}
%\label{eq: conjugate of steedrod first part}
%    c(\xi_0) = 1, \quad \sum_{i=0}^{n} \xi^{p^i}_{n-i} c(\xi_i) = 0, n > 0
%\end{equation}
%\begin{equation}
%    \tau_n + \sum_{i=0}^{n} \xi^{p^i}_{n-i} c(\tau_i) = 0, n \geq 0
%\end{equation}

For our computational purposes, we prefer to use a different set of generators.  We denote $t_n = c(\xi_n), n \geq 1,$ and $\Tilde{\tau}_n = c(\tau_n), n \geq 0$.  We also denote $t_0 = 1$.

\begin{prop}
\label{prop: dual steedrod algebra new formula}
Let $p$ be an odd prime, we can write 
\begin{equation}
\label{eq: dual steenrod algebra}
    \mathcal{A}_{*} = P [t_1, t_2, \cdots] \otimes E [\Tilde{\tau}_0, \Tilde{\tau}_1, \Tilde{\tau}_2, \cdots]
\end{equation}
as an algebra, where $|t_n| = 2(p^n - 1), |\Tilde{\tau}_n| = 2p^n - 1$. Moreover, the coproduct $\Delta: \mathcal{A}_* \rightarrow \mathcal{A}_* \otimes \mathcal{A}_*$ is given by:
\begin{equation}
\label{eq: steenrod coproduct map t}
    \Delta t_n = \sum_{i=0}^{n}  t_i \otimes t^{p^i}_{n-i}, 
\quad
    \Delta \Tilde{\tau}_n = \sum_{i=0}^{n} \Tilde{\tau}_i \otimes t^{p^i}_{n-i}   + 1 \otimes \Tilde{\tau}_n
\end{equation}

\end{prop}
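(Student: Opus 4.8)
\emph{Proof plan.} The plan is to observe that the conjugation $c$ is the antipode of the Hopf algebra $\mathcal{A}_*$, and hence enjoys the two standard structural properties of an antipode: it is an algebra map (because $\mathcal{A}_*$ is graded-commutative) and it is a coalgebra anti-homomorphism. I would establish the algebra statement \eqref{eq: dual steenrod algebra} from the former, and then obtain the coproduct formulas \eqref{eq: steenrod coproduct map t} by transporting the Milnor coproduct \eqref{eq: dual steenrod coproduct milnor} through $c$ using the latter.

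For the algebra decomposition, note first that $c$ is an algebra homomorphism: the antipode is always a graded anti-homomorphism, and graded-commutativity forces the Koszul signs to cancel, so $c(xy)=c(x)c(y)$. Moreover $\mathcal{A}_*$ is commutative, so $c$ is an involution and therefore an algebra automorphism. Composing the presentation isomorphism $P[\xi_1,\xi_2,\dots]\otimes E[\tau_0,\tau_1,\dots]\xrightarrow{\ \sim\ }\mathcal{A}_*$ with $c$ then gives another algebra isomorphism carrying the abstract generators to $t_n=c(\xi_n)$ and $\Tilde{\tau}_n=c(\tau_n)$. This shows that the $t_n$ and $\Tilde{\tau}_n$ freely generate $\mathcal{A}_*$ as a polynomial-tensor-exterior algebra, and since $c$ preserves internal degree we get $|t_n|=2(p^n-1)$ and $|\Tilde{\tau}_n|=2p^n-1$.

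For the coproduct, the one structural input is the antipode compatibility $\Delta\circ c=(c\otimes c)\circ T\circ\Delta$, where $T$ is the flip; this is the statement that the antipode is a coalgebra anti-homomorphism, and it can be derived directly from the defining recursions \eqref{eq: conjugate of steedrod first part}. Applying it to $\xi_n$ and using $\Delta\xi_n=\sum_{i=0}^n\xi_{n-i}^{p^i}\otimes\xi_i$ gives
\[
\Delta t_n=\Delta c(\xi_n)=(c\otimes c)\,T\Bigl(\sum_{i=0}^n\xi_{n-i}^{p^i}\otimes\xi_i\Bigr)=\sum_{i=0}^n c(\xi_i)\otimes c(\xi_{n-i})^{p^i}=\sum_{i=0}^n t_i\otimes t_{n-i}^{p^i},
\]
where the third equality uses that $c$ is an algebra map over $\mathbb{F}_p$ and hence commutes with the $p$-th power Frobenius. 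The computation for $\Tilde{\tau}_n$ is identical in structure: starting from $\Delta\tau_n=\tau_n\otimes1+\sum_{i=0}^n\xi_{n-i}^{p^i}\otimes\tau_i$, applying $T$ and then $c\otimes c$ (with $c(1)=1$) yields $\Delta\Tilde{\tau}_n=1\otimes\Tilde{\tau}_n+\sum_{i=0}^n\Tilde{\tau}_i\otimes t_{n-i}^{p^i}$, which is exactly \eqref{eq: steenrod coproduct map t}.

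The computation is essentially bookkeeping once the antipode compatibility is in hand. The only points demanding care are keeping track of the flip $T$ and the Frobenius twists so that the index ranges match \eqref{eq: steenrod coproduct map t}, and checking that no Koszul signs intervene; the latter holds because in each flipped tensor factor involving an odd class $\tau_i$ the other factor ($1$ or an even power of $\xi$) has even degree. I expect no genuine obstacle beyond correctly invoking $\Delta\circ c=(c\otimes c)\circ T\circ\Delta$, which is the single nontrivial structural fact, with everything else forced by $c$ being a degree-preserving algebra automorphism.
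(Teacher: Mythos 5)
Your proposal is correct, but it takes a genuinely different route from the paper. The paper proves the coproduct formulas by induction on $n$: applying $\Delta$ to the defining recursion $\sum_{i=0}^{n}\xi_{n-i}^{p^i}c(\xi_i)=0$, substituting the inductive hypothesis for $\Delta t_i$ with $i<n$, and verifying by a direct expansion that $\sum_{i=0}^{n-1}(\Delta\xi_{n-i})^{p^i}(\Delta t_i)+\sum_{i=0}^{n}t_i\otimes t_{n-i}^{p^i}=0$, which forces the claimed formula for $\Delta t_n$. You instead invoke the general Hopf-algebra fact that the antipode is a coalgebra anti-homomorphism, $\Delta\circ c=(c\otimes c)\circ T\circ\Delta$, and simply transport Milnor's coproduct through $c$; your index bookkeeping and the observation that no Koszul signs appear (each odd class $\tau_i$ is always paired against an even-degree factor) are correct, as is the use of $c(x^p)=c(x)^p$. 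Your approach is shorter and more conceptual, and it also cleanly handles the algebra statement \eqref{eq: dual steenrod algebra} via the standard facts that the antipode of a commutative Hopf algebra is an algebra automorphism with $c^2=\mathrm{id}$ (a point the paper's proof does not address explicitly). The trade-off is that you lean on the anti-coalgebra-map property as a black box: your remark that it "can be derived directly from the defining recursions" is a little glib, since the usual proof goes through the convolution-algebra argument rather than the recursion \eqref{eq: conjugate of steedrod first part}; either cite a standard reference (e.g.\ Milnor--Moore or \cite{Ravenel_stable_homotopy_groups}, Appendix A1) or supply that argument. The paper's induction, by contrast, is entirely self-contained and uses nothing beyond $\Delta$ being an algebra map.
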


\begin{proof}
It is straightforward to deduce the coproduct formulas by induction on $n$.  Here, we outline the strategy to prove \eqref{eq: steenrod coproduct map t} for $t_n$. The formula for $\Tilde{\tau}_n$ can be verified similarly.

The case for $n = 0$ is trivial. Now, suppose \eqref{eq: steenrod coproduct map t} is true for $0 \leq m \leq n - 1$. Note \eqref{eq: conjugate of steedrod first part} implies 
$$\sum_{i=0}^{n-1} (\Delta \xi_{n-i})^{p^i} (\Delta t_i) + \Delta t_n = 0$$
To deduce the desired result, it suffices to show $$\sum_{i=0}^{n-1} (\Delta \xi_{n-i})^{p^i} (\Delta t_i) + \sum_{i=0}^{n}  t_i \otimes t^{p^i}_{n-i} = 0$$
Indeed, we have
\begin{equation}
    \begin{split}
         & \sum_{i=0}^{n-1} (\Delta \xi_{n-i})^{p^i} (\Delta t_i) + \sum_{i=0}^{n}  t_i \otimes t^{p^i}_{n-i} \\
         = & ~ \sum_{i=0}^{n} [(\sum_{j=0}^{n-i} \xi^{p^j}_{n-i-j} \otimes \xi_j)^{p^i} (\sum_{k=0}^{i}  t_k \otimes t^{p^k}_{i-k})] \\
         = & ~ \sum_{i=0}^{n} [(\sum_{j=0}^{n-i} \xi^{p^{i+j}}_{n-i-j} \otimes \xi_j^{p^i}) (\sum_{k=0}^{i}  t_k \otimes t^{p^k}_{i-k})] \\
         = & ~ \sum_{j+r+k+s=n} \xi^{p^{n-r}}_{r} t_k  \otimes \xi_j^{p^{k+s}} t^{p^k}_{s}\\
         = & ~ \sum_{r+k<n}  \xi^{p^{n-r}}_{r} t_k  \otimes (\sum_{j+s=n-k-r}  \xi_j^{p^{s}} t_{s} )^{p^k} + \sum_{r+k=n}  \xi^{p^{n-r}}_{r} t_k  \otimes 1\\
         = & 0  \\
    \end{split}    
\end{equation}

\end{proof}

\begin{rem}
The advantage of using the new set of generators is that, as we will see in Section \ref{sec: spectral sequence}, $c(\xi_n)$ corresponds to the generator $t_n \in BP_*BP$ and $c(\tau_n)$ corresponds to $v_n \in BP_*$. Hence, we abuse the notation and denote $c(\xi_n)$ as $t_n$ when no confusion arises.
\end{rem}

\subsection{Cobar complexes}

\begin{defn}
Let $(A, \Gamma)$ be a Hopf algebroid.  A \textit{right $\Gamma$-comodule} $M$ is a right $A$-module $M$ together with a right $A$-linear map $\psi: M \to M \otimes_A \Gamma$ which is counitary and coassociative, i.e., the following diagrams commute.
\begin{align*}
\xymatrix{
M \ar[r]^{\psi \quad} \ar@{=}[dr] & M \otimes_A \Gamma \ar[d]^{M \otimes \varepsilon} \\ 
 & M
}
\quad
\xymatrix{
M \ar[r]^{\psi \quad} \ar[d]_{\psi} & M \otimes_A \Gamma \ar[d]^{M \otimes \Delta} \\ 
M \otimes_A \Gamma \ar[r]_{\psi \otimes \Gamma \quad} & M \otimes_A \Gamma \otimes_A \Gamma
}
\end{align*}
Left $\Gamma$-comodules are defined similarly.
\end{defn}

\begin{defn}
\label{defn: cobar complex}
Let $(A, \Gamma)$ be a Hopf algebroid.  Let $M$ be a right $\Gamma$-comodule.  The cobar complex $\Omega_{\Gamma}^{*,*}(M)$ is a cochain complex with
\begin{equation*}
    \Omega_{\Gamma}^{s,*}(M) = M \otimes_A \overline{\Gamma}^{\otimes s}
\end{equation*}
where $\overline{\Gamma}$ is the augmentation ideal of $\varepsilon: \Gamma \to A$. The differentials $d: \Omega_{\Gamma}^{s,*}(M) \to \Omega_{\Gamma}^{s+1,*}(M)$ are given by
\begin{equation*}
    \begin{split}
        & d(m \otimes x_1 \otimes x_2 \otimes \cdots \otimes x_s)  = - (\psi (m) - m \otimes 1) \otimes  x_1 \otimes x_2 \otimes \cdots \otimes x_s \\
         & - \sum_{i=1}^{s} (-1)^{\lambda_{i,j_i}} m \otimes x_1 \otimes \cdots \otimes x_{i-1} \otimes (\sum_{j_i} x_{i,j_i}' \otimes x_{i,j_i}'') \otimes x_{i+1} \otimes \cdots \otimes x_s \\
    \end{split}    
\end{equation*}
where
$$\sum_{j_i} x_{i,j_i}' \otimes x_{i,j_i}'' = \Delta (x_i) - 1 \otimes x_i - x_i \otimes 1$$
$$\lambda_{i,j_i} = i + |x_1|  + \cdots + |x_{i-1}| + |x_{i,j_i}'|$$

\end{defn}

The cohomology of $\Omega_{\Gamma}^{s,*}(M)$ is $Ext_{\Gamma}^{s,*}(A, M)$ (see \cite[Section A1.2]{Ravenel_stable_homotopy_groups}).

\section{Some relevant spectral sequences}
\label{sec: spectral sequence}

In this section, we review the construction and properties of some relevant spectral sequences, including the algebraic Novikov spectral sequence (algNSS), the Cartan-Eilenberg spectral sequence (CESS), and the May spectral sequence (MSS). These spectral sequences will be used in later computations.

\subsection{The algebraic Novikov spectral sequence}

Let $I$ be the ideal of $BP_*$ generated by $(p, v_1, v_2, \cdots)$. 
The ideal $I$ induces a filtration 
\begin{equation}
    BP_* = I^0 \supset I^1 \supset I^2 \supset I^3 \supset \cdots \supset I^k \supset I^{k+1} \supset \cdots
\end{equation}
Consider $y = a p^{k_0} v_1^{k_1} v_2^{k_2} \cdots \in BP_*$, where $a \in \mathbb{Z}_{(p)}$ is invertible.  We let $l(y) = \Sigma_i k_i$ denote the length of $y$.
Then $y \in I^k$ if and only if $l(y) \geq k$. 

Let $E_0^* BP_*$ denote the associated graded object, where $E_0^k BP_*:= I^k/I^{k+1}$. We have
\begin{equation}
    E_0^* BP_* = \bigoplus_{k \geq 0} I^k/I^{k+1} = \mathbb{F}_{p}[q_0, q_1, q_2, \cdots]
\end{equation}
is a $\mathbb{F}_{p}$-coefficient polynomial algebra, where the generator $q_i$ corresponds to $v_i$, $I^k/I^{k+1}$ corresponds to those homogeneous polynomials of degree $k$. 

Similarly, we can filter $BP_* BP$.  Denote 
$$F^k BP_* BP:= \eta_L(I^k) BP_* BP = BP_* BP \eta_R(I^k)$$
We define the associated graded object $E_0^k BP_*BP:= F^k BP_* BP/F^{k+1} BP_* BP$.  The filtration of $BP_*$ and $BP_* BP$ together induces a filtration on $\Omega_{BP_* BP} (BP_*)$.  Such filtration induces an associated spectral sequence \cite[A1.3.9]{Ravenel_stable_homotopy_groups} converging to $Ext^{s,t}_{BP_* BP} (BP_*, BP_*)$.

\begin{thm}[\cite{Miller_algANSS,Novikov_methods}]
There is a spectral sequence, called the algebraic Novikov spectral sequence (algNSS), converging to $Ext^{s,t}_{BP_* BP} (BP_*, BP_*)$ with $E_2$-page
$$E_2^{s,t,k} = Ext^{s,t}_{P_*} (\mathbb{F}_{p}, I^k / I^{k+1})$$ 
and $d_r^{alg}: E_r^{s,t,k} \to E_r^{s+1,t,k+r-1}$, where
\begin{equation}
    P_* := E_0 BP_*BP \otimes_{E_0 BP_*} \mathbb{F}_{p} = BP_* BP/I = P[t_1, t_2, \cdots]
\end{equation}
is the $\mathbb{F}_{p}$-coefficient polynomial algebra.

\end{thm}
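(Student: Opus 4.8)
The plan is to realize this spectral sequence as the one associated to the $I$-adically filtered cobar complex $\Omega^{*,*}_{BP_*BP}(BP_*)$ computing $Ext^{*,*}_{BP_*BP}(BP_*,BP_*)$, and then to pin down its initial page by a change-of-rings argument. As recorded in the paragraph preceding the statement, the decreasing filtrations $I^k \subset BP_*$ and $F^k BP_*BP = \eta_L(I^k)BP_*BP = BP_*BP\,\eta_R(I^k)$ are compatible: invariance of $I$ makes the filtration multiplicative and respected by $\eta_R$ and $\Delta$, hence by the cobar differential, so \cite[A1.3.9]{Ravenel_stable_homotopy_groups} produces a spectral sequence converging to $Ext^{s,t}_{BP_*BP}(BP_*,BP_*)$. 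It is cleanest to work first with the \emph{classical} indexing, in which $d_r$ raises filtration degree by exactly $r$ and cohomological degree $s$ by $1$, and to re-index only at the very end.

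The key step is to identify the initial page with the cobar cohomology of the associated graded Hopf algebroid $(E_0 BP_*, E_0 BP_*BP)$. First I would record that $E_0 BP_* = \mathbb{F}_p[q_0,q_1,\dots]$, with $q_i$ the class of $v_i$ (so $q_0$ is the class of $p$), and that since $BP_*BP = BP_*[t_1,t_2,\dots]$ is free as a left $BP_*$-module one has $E_0 BP_*BP = E_0 BP_* \otimes_{\mathbb{F}_p} P_*$ with $P_* = P[t_1,t_2,\dots]$. The filtration-preserving part of the cobar differential is precisely the cobar differential of this graded Hopf algebroid, whose structure maps are read off modulo $I^2$ from Proposition \ref{prop: BP right unit wrt v} and Proposition \ref{prop: BP coprod wrt v}: the coaction $\psi(q_n) = \sum_{i=0}^{n} q_i \otimes t_{n-i}^{p^i}$ and the $P_*$-coproduct $\Delta t_n = \sum_{k=0}^n t_{n-k}\otimes t_k^{p^{n-k}}$ (the filtration-raising correction $-\sum_i v_i b_{n-i,i-1}$ drops out of the leading term). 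Thus the classical $E_1$-page is $Ext_{E_0 BP_*BP}(E_0 BP_*, E_0 BP_*)$.

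Next I would invoke a change-of-rings isomorphism. Because $E_0 BP_*BP = E_0 BP_* \otimes_{\mathbb{F}_p} P_*$ exhibits $E_0 BP_*$ as a $P_*$-comodule algebra with the coaction above, the cobar complex $\Omega^{s}_{E_0 BP_*BP}(E_0 BP_*)$ is isomorphic, as a cochain complex, to $E_0 BP_* \otimes \overline{P_*}^{\otimes s}$, which is the $P_*$-cobar complex of the comodule $E_0 BP_*$. Hence $Ext_{E_0 BP_*BP}(E_0 BP_*, E_0 BP_*) \cong Ext_{P_*}(\mathbb{F}_p, E_0 BP_*)$, and splitting off the $k$-grading of $E_0 BP_* = \bigoplus_k I^k/I^{k+1}$ gives $\bigoplus_k Ext^{s,t}_{P_*}(\mathbb{F}_p, I^k/I^{k+1})$. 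Finally, re-indexing by $E^{\mathrm{alg}}_r := E^{\mathrm{classical}}_{r-1}$ turns this classical $E_1$ into the asserted $E_2$-page and converts "raises filtration by $r$" into "raises filtration by $r-1$", yielding $d^{\mathrm{alg}}_r : E_r^{s,t,k} \to E_r^{s+1,t,k+r-1}$; under this shift the first algebraic Novikov differential $d_2^{\mathrm{alg}}$ is exactly the leading filtration-raising correction, consistent with \cite{Miller_algANSS, Novikov_methods}.

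I expect the main obstacle to be the careful untwisting in the change-of-rings step: one must verify that the right $E_0 BP_*$-action glued along $\eta_R$ to the left action in the iterated product $\overline{E_0 BP_*BP}^{\otimes_{E_0 BP_*} s}$ collapses correctly to $E_0 BP_* \otimes \overline{P_*}^{\otimes s}$, and that this is an isomorphism of cochain complexes rather than merely on cohomology. A secondary point requiring care is convergence, where the filtration fails to be bounded above in a fixed internal degree (since $|v_0| = |p| = 0$ allows arbitrarily high powers of $p$); here one must lean on completeness and Hausdorffness of the $I$-adic filtration, with $\bigcap_k I^k = 0$, rather than on any naive finiteness argument.
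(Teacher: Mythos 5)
Your proposal is correct and follows essentially the same route the paper takes: the paper itself only sets up the $I$-adic filtration on $\Omega_{BP_*BP}(BP_*)$ and cites \cite[A1.3.9]{Ravenel_stable_homotopy_groups} (attributing the result to \cite{Miller_algANSS,Novikov_methods}), and your argument fills in the standard details of that construction --- identifying the associated graded Hopf algebroid $(E_0 BP_*, E_0 BP_*BP) = (E_0 BP_*, E_0 BP_*\otimes P_*)$, applying the split change-of-rings isomorphism to get $Ext_{P_*}(\mathbb{F}_p, I^k/I^{k+1})$, and re-indexing so that the classical $E_1$ becomes the stated $E_2$ with $d_r^{alg}$ raising $k$ by $r-1$. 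Your cautionary remarks about the untwisting in the change-of-rings step and about convergence (the filtration being unbounded in internal degree because $|p|=0$) are exactly the points the cited references handle.
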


\begin{rem}
Our index of pages here is different from the ones used in \cite{Andrews_Miller, Ravenel_stable_homotopy_groups}.  We have re-indexed the spectral sequence to align with the notations in \cite{Gheorghe_Wang_Xu, Isaksen_Wang_Xu_more_stable_stems}.
\end{rem}

\subsection{The Cartan-Eilenberg spectral sequence}

Let $\mathcal{A}_{*}$ denote the dual Steenrod algebra for an odd prime $p$.  Recall from Proposition \ref{prop: dual steedrod algebra new formula} that we have 
$$\mathcal{A}_{*} = P [t_1, t_2, \cdots] \otimes E [\Tilde{\tau}_0, \Tilde{\tau}_1, \Tilde{\tau}_2, \cdots]$$

Let $P_*$ denote $P [t_1, t_2, \cdots] \subset \mathcal{A}_{*}$.
Let $E_*$ denote $E [\Tilde{\tau}_0, \Tilde{\tau}_1, \Tilde{\tau}_2, \cdots]$. Then 
\begin{equation*}
    P_* \to \mathcal{A}_{*} \to E_*
\end{equation*}
is an extension of Hopf algebras \cite[A1.1.15]{Ravenel_stable_homotopy_groups}, which induces a spectral sequence \cite[A1.3.14]{Ravenel_stable_homotopy_groups} converging to $Ext_{\mathcal{A}_{*}}^{*,*} (\mathbb{F}_p, \mathbb{F}_p)$.

\begin{thm}[\cite{Ravenel_stable_homotopy_groups} Theorem 4.4.3, 4.4.4]
\label{thm: CESS props}
Let $p$ be an odd prime.  There is a spectral sequence, called the Cartan-Eilenberg spectral sequence (CESS), converging to $Ext^{s_1+s_2,t}_{\mathcal{A}_{*}} (\mathbb{F}_p, \mathbb{F}_p)$ with $E_2$-page
$$E_2^{s_1,t,s_2} = Ext_{P_*}^{s_1,t} (\mathbb{F}_p, Ext_{E_*}^{s_2} (\mathbb{F}_p, \mathbb{F}_p))$$ 
and $d_r: E_r^{s_1,t,s_2} \to E_r^{s_1+r,t,s_2-r+1}$.  Moreover, one can prove the following results:

(a) $Ext_{E_*}^{s_2, *} (\mathbb{F}_p, \mathbb{F}_p) = P[a_0, a_1, \cdots]$ is a polynomial algebra with generator $a_i \in Ext^{1, 2p^i-1}$ represented in the associated cobar complex $\Omega_{E_*}(\mathbb{F}_p)$ by $[\Tilde{\tau}_i]$.

(b) The $P_*$-coaction on $Ext_{E_*} (\mathbb{F}_p, \mathbb{F}_p)$ is given by
\begin{equation}
\label{eq: structure map of an in CESS}
    \psi(a_n)  = \sum_{i=0}^n a_i \otimes t_{n-i}^{p^i}  
\end{equation}

(c) The CESS collapses from $E_2$ with no nontrivial extensions.

(d) There is an isomorphism 
\begin{equation}
\label{eq: iso of E2}
    Ext_{P_*}^{s,t} (\mathbb{F}_{p}, I^k / I^{k+1}) \cong Ext_{P_*}^{s,t+k} (\mathbb{F}_p, Ext_{E_*}^{k} (\mathbb{F}_p, \mathbb{F}_p))
\end{equation}
between the $E_2$-page of the algNSS and the $E_2$-page of the CESS.
\end{thm}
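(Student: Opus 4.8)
The plan is to handle the four claims in order, using the general machinery of Hopf-algebra extensions to produce the spectral sequence itself and then carrying out three short algebraic computations plus one degree-matching argument. The spectral sequence with the stated $E_2$-page and differentials $d_r \colon E_r^{s_1,t,s_2} \to E_r^{s_1+r,t,s_2-r+1}$ is the Cartan--Eilenberg spectral sequence of the extension $P_* \to \mathcal{A}_* \to E_*$: Proposition \ref{prop: dual steedrod algebra new formula} shows that $P_*$ is a sub-Hopf-algebra (since $\Delta t_n = \sum_i t_i \otimes t_{n-i}^{p^i}$ lies in $P_* \otimes P_*$) with quotient $E_*$, so I would invoke \cite[A1.3.14]{Ravenel_stable_homotopy_groups} directly.

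For (a) and (b) I would work in the cobar complex. Under the projection $\mathcal{A}_* \to E_*$ every $t_i$ with $i \geq 1$ dies, so \eqref{eq: steenrod coproduct map t} collapses to $\Delta \Tilde{\tau}_n = \Tilde{\tau}_n \otimes 1 + 1 \otimes \Tilde{\tau}_n$; thus $E_*$ is exterior on primitive odd-degree generators. Each $[\Tilde{\tau}_n]$ is then a cocycle, and since the generators are odd-dimensional its powers $[\Tilde{\tau}_n | \cdots | \Tilde{\tau}_n]$ remain nonzero cocycles, giving $Ext_{E[\Tilde{\tau}_n]} = P[a_n]$ with $a_n \in Ext^{1,2p^n-1}$; a K\"unneth argument over $E_* = \bigotimes_n E[\Tilde{\tau}_n]$ assembles these into $P[a_0, a_1, \dots]$, which is (a). For (b) I would instead apply the full coproduct \eqref{eq: steenrod coproduct map t} to the representative $[\Tilde{\tau}_n]$ and project onto the $E_* \otimes P_*$ summand: the terms $\Tilde{\tau}_i \otimes t_{n-i}^{p^i}$ contribute $a_i \otimes t_{n-i}^{p^i}$, while $1 \otimes \Tilde{\tau}_n$ records internal $E_*$-structure and is discarded, yielding $\psi(a_n) = \sum_{i=0}^n a_i \otimes t_{n-i}^{p^i}$.

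Part (c) is where I expect the real work. My approach would be to use the multiplicativity of the spectral sequence together with the fact that its algebra generators are permanent cycles, so that the Leibniz rule forces $d_r = 0$ for all $r \geq 2$ and, with the freeness established in (a), rules out nontrivial extensions. The classes coming from $Ext_{P_*}(\mathbb{F}_p, \mathbb{F}_p)$ lie on the $s_2 = 0$ edge and are automatically permanent cycles; the subtle point is the $a_i$, which are not all comodule primitives, and which must be shown to survive by sparseness/degree considerations in each tridegree together with genuine lifts to $Ext_{\mathcal{A}_*}^{1,*}(\mathbb{F}_p,\mathbb{F}_p)$. Establishing this in all tridegrees, rather than merely in a stable range, is the crux, and it is precisely the content of \cite[Theorem 4.4.4]{Ravenel_stable_homotopy_groups}, whose argument I would follow.

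Finally, for (d) I would match the two $P_*$-comodules directly. By (a), $Ext_{E_*}^k$ has $\mathbb{F}_p$-basis the weight-$k$ monomials in the $a_i$, with coaction the multiplicative extension of $\psi(a_n) = \sum_i a_i \otimes t_{n-i}^{p^i}$ from (b); by Proposition \ref{prop: BP right unit wrt v} the associated-graded coaction on $I^k/I^{k+1}$ is the multiplicative extension of $\psi(q_n) = \sum_{i=0}^n q_i \otimes t_{n-i}^{p^i}$ (using $I^p \subset I^2$), on the basis of weight-$k$ monomials in the $q_i \leftrightarrow v_i$. Hence $q_i \leftrightarrow a_i$ is an isomorphism of $P_*$-comodule algebras weight by weight. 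Since $|a_i| = 2p^i - 1$ exceeds $|v_i| = 2(p^i - 1)$ by $1$, a weight-$k$ monomial gains internal degree exactly $k$ under this identification, which accounts for the shift $t \mapsto t + k$ and produces the isomorphism \eqref{eq: iso of E2}.
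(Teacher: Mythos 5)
The paper does not actually prove this theorem---it is quoted from Ravenel (Theorems 4.4.3 and 4.4.4), with only part (d) justified in the paragraph that follows, via the comodule isomorphism $q_i \leftrightarrow a_i$, the agreement of the coactions \eqref{eq: psi map for qn in alg} and \eqref{eq: structure map of an in CESS}, and the degree shift coming from $|a_i| = |q_i| + 1$. Your proposal is consistent with this treatment: your argument for (d) is essentially identical to the paper's, your cobar-complex computations for (a) and (b) are correct and standard, and you rightly identify the collapse statement (c) as the genuinely hard part whose proof must be taken from Ravenel rather than reproduced.
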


The (d) part shows the two $Ext$ groups are isomorphic (up to degree shifting). Moreover, we can show the two associated cobar complexes are isomorphic (up to a shifting of degrees). More precisely, there is a natural isomorphism 
\begin{equation}
    \Omega_{P_*} (I^k / I^{k+1}) \cong \Omega_{P_*} (Ext_{E_*}^{k} (\mathbb{F}_p, \mathbb{F}_p))
\end{equation}
sending $t_i$ to $t_i$ and $q_i$ to $a_i$.

Indeed, by Theorem \ref{thm: CESS props} (a), $Ext_{E_*}^{k} (\mathbb{F}_p, \mathbb{F}_p)$ is the homogeneous degree $k$ part of the polynomial $P[a_0, a_1, a_2, \cdots]$.  Hence $I^k / I^{k+1} \cong Ext_{E_*}^{k} (\mathbb{F}_p, \mathbb{F}_p)$. If $x \in I^k / I^{k+1}$ has inner degree $t$, then its corresponding element $\Tilde{x} \in Ext_{E_*}^{k} (\mathbb{F}_p, \mathbb{F}_p)$ has inner degree $t+k$.  This degree shifting is a consequence of the fact that $|q_i| = 2(p^i-1) = |a_i| - 1$.  Moreover, the comodule structure map $\psi: I^k/I^{k+1} \to I^k/I^{k+1} \otimes P_*$ induced from \eqref{eq: BP right unit wrt v} is given by
\begin{equation}
\label{eq: psi map for qn in alg}
    \psi(q_n)  = \sum_{i=0}^n q_i \otimes t_{n-i}^{p^i}  
\end{equation}
which also agrees with \eqref{eq: structure map of an in CESS}.

\begin{notation}
\label{notation: denote E2 element by cobar}
In this paper, we often refer to $E_2$-terms of the algNSS by their representative in the cobar complex $\Omega_{P_*} (I^k / I^{k+1})$. For example, we let $q_0 \otimes t_1^p$ denote its homology class in $Ext_{P_*}^{1,*} (\mathbb{F}_{p}, I / I^{2})$. The correspondence between different $E_2$-pages becomes clear under this naming convention.  For example, $q_0 \otimes t_1^p \in Ext_{P_*}^{1,*} (\mathbb{F}_{p}, I / I^{2})$ in the algNSS corresponds to $a_0 \otimes t_1^p \in Ext_{P_*}^{1,*} (\mathbb{F}_{p}, Ext_{E_*}^{1} (\mathbb{F}_p, \mathbb{F}_p))$ in the CESS, which represents $\Tilde{\tau}_0 \otimes t_1^p \in Ext_{\mathcal{A}_{*}}^{2,*} (\mathbb{F}_{p}, \mathbb{F}_{p})$ in the ASS.

\end{notation}

\subsection{The May spectral sequence}

The $E_2$-terms $Ext_{\mathcal{A}_{*}}^{s,t} (\mathbb{F}_p, \mathbb{F}_p)$ of the Adams spectral sequence could be computed via the cobar complex $\Omega_{\mathcal{A}_*}^{*,*}(\mathbb{F}_p)$.  In practice, we could simplify such computations by filtering $\Omega_{\mathcal{A}_*}^{*,*}(\mathbb{F}_p)$. 

\begin{thm}[\cite{May_cohomology_Lie_algebra}, \cite{Ravenel_stable_homotopy_groups} Theorem 3.2.5]
\label{thm: MSS}
Let $p$ be an odd prime.  $\mathcal{A}_{*}$ can be given
an increasing filtration by setting the May degree $M(t_i^{p^j}) = M(\Tilde{\tau}_{i-1}) = 2i-1$ for $i-1, j \geq 0$.  The filtration of $\mathcal{A}_{*}$ naturally induces a filtration of $\Omega_{\mathcal{A}_*}^{*,*}(\mathbb{F}_p)$.  The associated spectral sequence converging to $Ext_{\mathcal{A}_{*}}^{s,t} (\mathbb{F}_p, \mathbb{F}_p)$ is called the May spectral sequence (MSS).  The MSS has $E_1$ page
\begin{equation}
    E_1^{*,*,*} = E [h_{i,j} | i \geq 1, j \geq 0] \otimes P [b_{i,j} | i \geq 1, j \geq 0] \otimes P [a_i |i \geq 0]
\end{equation}
and $d_r: E_r^{s, t, M} \to E_r^{s+1, t, M-r}$, where
\begin{equation}
    \begin{split}
        h_{i,j} = & [t_{i}^{p^j}] \in E^{1,2(p^i-1)p^j,2i-1}_1 \\
        b_{i,j} = & [ ~ \sum_{k=1}^{p-1}  \binom{p}{k}/p ~ (t_{i}^{p^j})^k \otimes (t_{i}^{p^j})^{p-k} ~ ] \in E^{2,2(p^i-1)p^{j+1},p(2i-1)}_1 \\
        a_{i} = & [\Tilde{\tau}_{i}] \in E^{1,2p^i-1,2i+1}_1 \\
    \end{split}    
\end{equation}
\end{thm}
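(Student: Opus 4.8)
The plan is to realize the MSS as the spectral sequence associated to a filtered cochain complex, namely the cobar complex $\Omega_{\mathcal{A}_*}^{*,*}(\mathbb{F}_p)$ equipped with the May filtration. First I would check that declaring $M(t_i^{p^j}) = M(\Tilde{\tau}_{i-1}) = 2i-1$ and extending multiplicatively defines an increasing algebra filtration on $\mathcal{A}_*$ compatible with the coproduct, so that $(\mathcal{A}_*, M)$ becomes a filtered Hopf algebra. The key computation, read off directly from the coproduct formulas \eqref{eq: steenrod coproduct map t}, is that every ``mixed'' term in $\Delta t_n = \sum_{i=0}^n t_i \otimes t_{n-i}^{p^i}$ with $0 < i < n$ has May degree $(2i-1)+(2(n-i)-1) = 2n-2$, strictly below $M(t_n) = 2n-1$; the same estimate holds after raising to $p^j$-th powers and for $\Delta \Tilde{\tau}_n$. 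Hence the coproduct, the reduced coproduct, and therefore the cobar differential never raise May degree.

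Next, the May filtration of $\mathcal{A}_*$ induces a filtration of $\Omega_{\mathcal{A}_*}^{*,*}(\mathbb{F}_p)$, and the associated spectral sequence converges to $Ext_{\mathcal{A}_*}^{*,*}(\mathbb{F}_p, \mathbb{F}_p)$ because $\mathcal{A}_*$ is connected and of finite type, so that in each internal degree $t$ only finitely many May degrees occur and the filtration is finite. Writing the cobar differential as $d = d_0 + d_1 + d_2 + \cdots$, where $d_r$ lowers May degree by $r$, the May-degree-preserving part $d_0$ is exactly the cobar differential of the associated graded Hopf algebra $E_0 \mathcal{A}_*$, so the $E_1$-page is $Ext_{E_0\mathcal{A}_*}^{*,*}(\mathbb{F}_p,\mathbb{F}_p)$ and the higher differentials $d_r$ lower $M$ by $r$ as claimed. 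The filtration-drop estimate above shows that the reduced coproduct of each generator vanishes in $E_0 \mathcal{A}_*$; thus $E_0 \mathcal{A}_*$ is \emph{primitively generated} and, as an algebra, remains $P[t_1, t_2, \cdots] \otimes E[\Tilde{\tau}_0, \Tilde{\tau}_1, \cdots]$ with every $t_i^{p^j}$ and every $\Tilde{\tau}_i$ primitive.

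Finally I would compute $Ext_{E_0\mathcal{A}_*}$. Since all generators are primitive, each $\mathbb{F}_p[t_i]$ and each $E[\Tilde{\tau}_i]$ is a sub-Hopf-algebra, $E_0\mathcal{A}_*$ is their finite-type tensor product, and a K\"unneth argument reduces the problem to the Ext of each elementary factor. For an exterior factor one has $Ext_{E[\Tilde{\tau}_i]}(\mathbb{F}_p,\mathbb{F}_p) = P[a_i]$ with $a_i = [\Tilde{\tau}_i]$. For a polynomial factor $\mathbb{F}_p[t_i]$ with $t_i$ primitive, the length-one cobar differential is $d[t_i^k] = -\sum_{0<l<k} \binom{k}{l}\,[t_i^l \otimes t_i^{k-l}]$, which vanishes precisely when $k$ is a power of $p$; carrying out the standard computation gives $Ext_{\mathbb{F}_p[t_i]}(\mathbb{F}_p,\mathbb{F}_p) = E[h_{i,j} : j \geq 0] \otimes P[b_{i,j} : j \geq 0]$ with $h_{i,j} = [t_i^{p^j}]$ and $b_{i,j}$ the degree-two cocycle displayed in the statement. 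Tensoring over all factors yields $E_1 = E[h_{i,j}] \otimes P[b_{i,j}] \otimes P[a_i]$, and a bookkeeping of degrees recovers the stated $(s,t,M)$-trigradings of $h_{i,j}$, $b_{i,j}$, and $a_i$.

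The step I expect to be the main obstacle is making the filtration formalism precise: verifying that the tensor-product filtration interacts with $\Delta$ so that $d$ splits as $\sum_r d_r$ with $d_0$ the associated-graded differential, and confirming strong convergence from the finite-type hypothesis. The elementary Hopf-algebra Ext computations, though the technical heart, are standard (May's thesis); the genuinely delicate input special to $\mathcal{A}_*$ is the filtration-drop estimate that forces $E_0\mathcal{A}_*$ to be primitively generated.
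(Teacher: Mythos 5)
The paper does not prove this theorem; it quotes it as a known result of May, citing \cite{May_cohomology_Lie_algebra} and \cite{Ravenel_stable_homotopy_groups} (Theorem 3.2.5), so there is no in-paper argument to compare against. Your outline is the standard proof from those sources --- filter the cobar complex by May degree, use the filtration-drop estimate on the coproduct to see that $E_0\mathcal{A}_*$ is primitively generated, and compute $Ext$ of the elementary tensor factors --- and it is essentially correct as a sketch.
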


\begin{rem}
\label{rem: two bij}
Technically, we could denote the generator by $\Tilde{b}_{i,j}$ instead of $b_{i,j}$ to avoid possible confusion with the element 
$$b_{i,j} = \frac{1}{p} [ (\sum_{k=0}^i t_{i-k} \otimes t_{k}^{p^{i-k}})^{p^{j+1}} - \sum_{k=0}^i t_{i-k}^{p^{j+1}} \otimes t_{k}^{p^{i-k+j+1}}]$$
defined in \eqref{eq: BP defn bij}. However, let $x$ be the element in $\Omega_{\mathcal{A}_*}^{*,*}(\mathbb{F}_p)$ corresponding to $b_{i,j}$ (Notations \ref{notation: denote E2 element by cobar}). Note $\Omega_{\mathcal{A}_*}^{*,*}(\mathbb{F}_p)$ has coefficient  $\mathbb{F}_p$, we have 
\begin{equation*}
    \begin{split}
        x & = \frac{1}{p} \sum_{k_1 \neq k_2} \sum_{t=1}^{p-1} \binom{p^{j+1}}{t p^j} (t_{i-k_1} \otimes t_{k_1}^{p^{i-k_1}})^{t p^j} (t_{i-k_2} \otimes t_{k_2}^{p^{i-k_2}})^{(p-t) p^j} \\
        & = \frac{1}{p} \sum_{k_1 \neq k_2} \sum_{t=1}^{p-1} \binom{p}{t} (t_{i-k_1}^{t p^j} t_{i-k_2}^{(p-t) p^j} \otimes t_{k_1}^{tp^{i+j-k_1}} t_{k_2}^{(p-t)p^{i+j-k_2}}) \\
    \end{split}    
\end{equation*}
Its May filtration leading term is 
$$\frac{1}{p} \sum_{t=1}^{p-1} \binom{p}{t} t_i^{t p^j} \otimes t_i^{(p-t) p^j} = \Tilde{b}_{i,j}$$
Therefore, we often abuse the notation and also denote $\Tilde{b}_{i,j}$ by $b_{i,j}$.
\end{rem}

Note we can analogously define an increasing filtration on $\Omega_{P_*} (I^k / I^{k+1})$ (hence also on $\Omega_{P_*} (Ext_{E_*}^{k} (\mathbb{F}_p, \mathbb{F}_p))$) by setting the May degree $M(t_i^{p^j}) = M(q_{i-1}) = 2i-1$ for $i-1, j \geq 0$. 
We observe the following structure maps:
\begin{equation}
    \psi(q_n)  = \sum_{i=0}^n q_i \otimes t_{n-i}^{p^i}, \quad \Delta t_n = \sum_{i=0}^{n}  t_i \otimes t^{p^i}_{n-i}.  
\end{equation}
For $i < n$, we have $M(q_n) = 2n+1 \geq 2n = 2i+1 + 2(n-i)-1 = M(q_i \otimes t_{n-i}^{p^i})$.  Similarly, for $0<i<n$, $M(t_n) = 2n-1 \geq 2n -2 = M(t_i \otimes t^{p^i}_{n-i})$.  Let $d$ denote the differential of the cobar complex $\Omega_{P_*} (I^k / I^{k+1})$ (see Definition \ref{defn: cobar complex}).  Then $d$ respects this May filtration.  Hence, we can talk about the May filtration of the algNSS $E_2$-terms.  Moreover, the May filtration of the elements in the algNSS $E_2$-page agrees with the May filtration of the corresponding elements in the ASS $E_2$-page (see Notations \ref{notation: denote E2 element by cobar}).

\section{Secondary Adams differentials on the fourth line}
\label{sec: fourth line}

In this section, we prove our main result Theorem \ref{thm: new adams d2 fourth line}.  Using Theorem \ref{thm: miller transition}, we determine these secondary Adams differentials $d_2^{Adams}$ by computing their corresponding secondary  algebraic Novikov differentials $d_2^{alg}$.

Our computational strategy in this paper can be summarized as follows:
\begin{enumerate}
    \item Let $x$ be an element in the Adams $E_2$-page.  Let $l$ be the MSS representative of $x$. 
    \item As stated in Notations \ref{notation: denote E2 element by cobar}, we find the the element $x'$ (resp. $l'$) in the algebraic Novikov spectral sequence corresponding to $x$ (resp. $l$).   We deduce $l'$ is the May filtration leading term of $x'$. 
    \item Through a careful analysis of $l'$, we determine the May filtration leading term $y'$ of $d_2^{alg} (x')$.
    \item Let $y$ be the element in the MSS corresponding to $y'$. Then we conclude $d_2^{Adams} (x)$ is represented by $y$.
\end{enumerate}

In particular, we will use Table \ref{table: fourth line elements representatives} for the four families of Adams $E_2$-terms in Theorem \ref{thm: new adams d2 fourth line}.

\renewcommand{\arraystretch}{1.6}

\begin{table}[ht!]
\centering
\begin{tabular}{| c | c | c | c |} 
 \hline
 Adams $E_2$-term $x$ & MSS representative $l$ & corresponding algNSS term $l'$\\ 
 \hline
 $h_{4, i} h_{3, i} g_i$ & $h_{4,i} h_{3, i} h_{2, i} h_{1, i}$ & $t_4^{p^{i}} \otimes t_3^{p^{i}} \otimes t_2^{p^{i}} \otimes t_1^{p^{i}}$  \\
 \hline
 $h_{4, i} h_{3, i+1} k_{i+2}$ & $h_{4,i} h_{3, i+1} h_{2, i+2} h_{1, i+3}$ & $t_4^{p^{i}} \otimes t_3^{p^{i+1}} \otimes t_2^{p^{i+2}} \otimes t_1^{p^{i+3}}$ \\
 \hline
 $h_{4, i} g_i h_{i+3}$ & $h_{4,i} h_{2, i} h_{1, i} h_{1, i+3}$ & $t_4^{p^{i}} \otimes t_2^{p^{i}} \otimes t_1^{p^{i}} \otimes t_1^{p^{i+3}}$ \\
 \hline 
 $h_{3, i} h_{2, i+1} k_{i}$ & $h_{3,i} h_{2, i+1} h_{2, i} h_{1, i+1}$ & $t_3^{p^{i}} \otimes t_2^{p^{i+1}} \otimes t_2^{p^{i}} \otimes t_1^{p^{i+1}}$  \\
 \hline  
\end{tabular}
\caption{Representations of the four elements}
\label{table: fourth line elements representatives}
\end{table}

Now we start the actual computations.

\begin{lem}
\label{lem: compute d(tkpi)}
Let $d$ denote the differential in the cobar complex $\Omega_{BP_* BP}^{*,*}(BP_*)$ (Definition \ref{defn: cobar complex}). Let $n,i \geq 1$, we have
\begin{equation}
    d(t_n^{p^i}) = \sum_{k=1}^{n-1} t_{n-k}^{p^i} \otimes t_{k}^{p^{n-k+i}} + p b_{n,i-1} \quad \text{mod} ~ I^2 \cdot BP_* BP  \otimes_{BP_*} BP_*BP
\end{equation}
\end{lem}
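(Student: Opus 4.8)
The plan is to compute $d(t_n^{p^i})$ directly in the cobar complex $\Omega_{BP_* BP}^{*,*}(BP_*)$ modulo $I^2 \cdot BP_* BP \otimes_{BP_*} BP_* BP$, using the coproduct formula from Proposition \ref{prop: BP coprod wrt v}. By Definition \ref{defn: cobar complex}, for a single generator $x$ in filtration $s=1$ the differential reduces to $d(x) = -(\Delta x - 1 \otimes x - x \otimes 1)$ up to the sign conventions $\lambda_{i,j_i}$, so the first step is to apply $\Delta$ to $t_n^{p^i}$ and strip off the two primitive terms. Since raising to the $p^i$-th power is additive modulo $p$ on the relevant expressions, I would first establish $\Delta(t_n^{p^i}) = (\Delta t_n)^{p^i}$ and then substitute the formula $\Delta(t_n) \equiv \sum_{k=0}^n t_{n-k} \otimes t_k^{p^{n-k}} - \sum_{j=1}^{n-1} v_j b_{n-j, j-1}$ from \eqref{eq: BP coprod wrt v}.

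\textbf{Handling the $p^i$-th power.}
The main subtlety is taking the $p^i$-th power of a sum of two kinds of terms: the ``diagonal'' part $\sum_{k=0}^n t_{n-k} \otimes t_k^{p^{n-k}}$ and the correction $\sum_{j=1}^{n-1} v_j b_{n-j,j-1}$, which lies in $I \cdot BP_* BP \otimes_{BP_*} BP_* BP$ because each $v_j \in I$. Working modulo $I^2$, any term in the expansion of $(\,\cdot\,)^{p^i}$ that picks up two or more factors from the correction part is killed, so only the pure diagonal power and the linear-in-correction terms survive. The pure power $\left(\sum_{k=0}^n t_{n-k} \otimes t_k^{p^{n-k}}\right)^{p^i}$ must then be analyzed: I expect it to produce $\sum_{k=0}^n t_{n-k}^{p^i} \otimes t_k^{p^{n-k+i}}$ together with the $p\,b_{n,i-1}$ correction term, precisely by the definition \eqref{eq: BP defn bij} of $b_{n,i-1}$, whose numerator is exactly the difference between the $p^i$-th power of the diagonal sum and the ``Frobenius-twisted'' sum $\sum_k t_{n-k}^{p^i}\otimes t_k^{p^{n-k+i}}$.

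\textbf{Assembling the answer and identifying the obstacle.}
After extracting the $k=0$ and $k=n$ terms of $\sum_{k=0}^n t_{n-k}^{p^i}\otimes t_k^{p^{n-k+i}}$ as the primitives $t_n^{p^i}\otimes 1$ and $1\otimes t_n^{p^i}$ to be cancelled by the $-1\otimes x - x\otimes 1$ in $d$, the remaining sum runs over $1 \le k \le n-1$, yielding $\sum_{k=1}^{n-1} t_{n-k}^{p^i}\otimes t_k^{p^{n-k+i}}$. Adding back the $p\,b_{n,i-1}$ term from the definition of $b_{n,i-1}$ gives the claimed formula. The main obstacle I anticipate is \emph{bookkeeping the corrections modulo $I^2$}: one must verify both that the linear-in-$v_j$ cross terms from expanding $(\text{diagonal}+\text{correction})^{p^i}$ genuinely vanish mod $I^2$ (a priori they are linear in $v_j$, hence potentially in $I \setminus I^2$, so it requires checking that they actually pair with an extra factor already in $I$, or are absorbed), and that the signs $(-1)^{\lambda_{i,j_i}}$ in the cobar differential all reduce to the uniform sign displayed in the statement. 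I would keep the sign convention explicit only long enough to confirm the overall $+$ on both the tensor sum and the $p\,b_{n,i-1}$ term, and otherwise treat the computation as a careful but routine unwinding of the coproduct.
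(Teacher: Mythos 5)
Your proposal is correct and follows essentially the same route as the paper's proof: apply the cobar differential to the single cobar factor, substitute the coproduct formula of Proposition \ref{prop: BP coprod wrt v} into $(\Delta t_n)^{p^i}$, discard the $v_j$-correction terms modulo $I^2$, and recognize the difference between the $p^i$-th power of the diagonal sum and the Frobenius-twisted sum as $p\,b_{n,i-1}$ via \eqref{eq: BP defn bij}. The one point you flag as an obstacle --- the vanishing of the linear-in-$v_j$ cross terms --- is resolved exactly as you suspect: the binomial coefficients $\binom{p^i}{j}$ with $0<j<p^i$ are divisible by $p$, so those terms acquire a factor of $p$ and land in $p\cdot I\subset I^2$.
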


\begin{proof}

After reduction module $I^2 \cdot BP_* BP  \otimes_{BP_*} BP_*BP$, we have 
\begin{equation*}
    \begin{split}
        d(t_n^{p^i}) & = \Delta (t_n^{p^i}) - 1 \otimes t_n^{p^i} - t_n^{p^i} \otimes 1 \\
        & = (\sum_{k=0}^n t_{n-k} \otimes t_{k}^{p^{n-k}} - \sum_{i=1}^{n-1} v_i b_{n-i, i-1})^{p^i} - 1 \otimes t_n^{p^i} - t_n^{p^i} \otimes 1 \quad (\text{by} ~ \eqref{eq: BP coprod wrt v}) \\
        & = (\sum_{k=0}^n t_{n-k} \otimes t_{k}^{p^{n-k}})^{p^i} - 1 \otimes t_n^{p^i} - t_n^{p^i} \otimes 1 \\
        & = \sum_{k=1}^{n-1} t_{n-k}^{p^i} \otimes t_{k}^{p^{n-k+i}} + p b_{n,i-1} \quad (\text{compare with} ~ \eqref{eq: BP defn bij})\\
    \end{split}    
\end{equation*}

\end{proof}

\begin{prop}
\label{prop: compute alg d2 first example}
Let $x \in Ext^{4,*}_{P_*} (\mathbb{F}_{p}, BP_* / I)$ be an element in the $E_2$-page of the algNSS such that $x$ has May filtration leading term $t_4^{p^{i}} \otimes t_3^{p^{i}} \otimes t_2^{p^{i}} \otimes t_1^{p^{i}}$, where $i \geq 1$. Then $d_2^{alg} (x)$ has May filtration leading term $q_0 b_{4, i-1} \otimes t_3^{p^{i}} \otimes t_2^{p^{i}} \otimes t_1^{p^{i}}$.
\end{prop}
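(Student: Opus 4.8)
The plan is to compute $d_2^{alg}(x)$ by the standard recipe for the spectral sequence of a filtered cochain complex: lift $x$ to a cochain $\hat{x}$ in the cobar complex $\Omega_{BP_*BP}^{*,*}(BP_*)$ whose lowest $I$-adic filtration part is the given leading representative $l' = t_4^{p^i}\otimes t_3^{p^i}\otimes t_2^{p^i}\otimes t_1^{p^i}$, apply the cobar differential $d$, and read off the part lying in $I$-filtration $1$, i.e. its image in $\Omega_{P_*}(I/I^2)$ after the substitution $v_0 = p \mapsto q_0$. Since $x$ survives to the $E_2$-page, it is a cocycle for the associated-graded differential $d_0$, so the filtration-$0$ part of $d\hat{x}$ is a $d_0$-coboundary; adjusting $\hat{x}$ accordingly, the filtration-$1$ part of $d\hat{x}$ is a genuine cocycle in $\Omega_{P_*}(I/I^2)$ representing $d_2^{alg}(x)$, and I will extract its May filtration leading term.

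First I would compute $d$ on the leading representative. Writing $l' = [t_4^{p^i}]\,[t_3^{p^i}]\,[t_2^{p^i}]\,[t_1^{p^i}]$ as a product of $1$-cochains and applying the Leibniz rule together with Lemma \ref{lem: compute d(tkpi)}, each factor $d[t_n^{p^i}]$ contributes, modulo $I^2$, the filtration-$0$ diagonal terms $\sum_{k=1}^{n-1} t_{n-k}^{p^i}\otimes t_k^{p^{n-k+i}}$ together with the single filtration-$1$ term $p\,b_{n,i-1}$. The diagonal terms keep the total in $I$-filtration $0$ and assemble into the vanishing filtration-$0$ part, while the filtration-$1$ part arising from $l'$ is the sum, over the four factors, of
\begin{align*}
& q_0\, b_{4,i-1}\otimes t_3^{p^i}\otimes t_2^{p^i}\otimes t_1^{p^i}, \quad t_4^{p^i}\otimes q_0\, b_{3,i-1}\otimes t_2^{p^i}\otimes t_1^{p^i}, \\
& t_4^{p^i}\otimes t_3^{p^i}\otimes q_0\, b_{2,i-1}\otimes t_1^{p^i}, \quad t_4^{p^i}\otimes t_3^{p^i}\otimes t_2^{p^i}\otimes q_0\, b_{1,i-1},
\end{align*}
up to signs, where I have already substituted $p\mapsto q_0$. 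Using $M(t_n^{p^j}) = 2n-1$, $M(q_0)=1$, and (via Remark \ref{rem: two bij}) $M(b_{n,j}) = p(2n-1)$, these four terms have May degrees $7p+10$, $5p+12$, $3p+14$, and $p+16$ respectively. For $p\ge 5$ these are strictly decreasing, so the unique top-May-degree term coming from $l'$ is $q_0\,b_{4,i-1}\otimes t_3^{p^i}\otimes t_2^{p^i}\otimes t_1^{p^i}$, of May degree $7p+10$. I would also confirm that this $5$-cochain is a cocycle in $\Omega_{P_*}(I/I^2)$ modulo lower May filtration, which follows from $b_{4,i-1}$ being a cocycle and $\psi(q_0)=q_0\otimes 1$, so it genuinely represents a leading term rather than a boundary.

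The main obstacle is ruling out that the lower-May-filtration summands of $x$ (the part beyond $l'$) contribute a filtration-$1$ term of May degree $\geq 7p+10$. This is a real concern, because $d_2^{alg}$ \emph{raises} May degree: replacing a factor $t_n^{p^\ell}$ (May degree $2n-1$) by $q_0\,b_{n,\ell-1}$ (May degree $1+p(2n-1)$) increases the May degree by $1+(p-1)(2n-1)$, which grows with $n$. Hence a correction term containing a factor $t_m^{p^\ell}$ with $m\ge 5$ could, a priori, produce a term exceeding $7p+10$. I would close this gap by a degree count in the fixed internal degree $t = 2p^i(p^4+p^3+p^2+p-4)$ of $l'$: any $I$-filtration-$0$, May-degree-$<16$ cobar monomial of this internal degree cannot contain a factor $t_m^{p^\ell}$ with $m\ge 5$, since the remaining three factors would then be forced into internal degrees incompatible with $t$. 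Granting this, every correction contributes only in May degree $<7p+10$, and the May filtration leading term of $d_2^{alg}(x)$ is exactly $q_0\,b_{4,i-1}\otimes t_3^{p^i}\otimes t_2^{p^i}\otimes t_1^{p^i}$, as claimed.
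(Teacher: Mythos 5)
Your proposal is correct and follows essentially the same route as the paper's proof: pick a cobar representative with May-leading term $t_4^{p^i}\otimes t_3^{p^i}\otimes t_2^{p^i}\otimes t_1^{p^i}$ plus lower-May-filtration corrections, use Lemma \ref{lem: compute d(tkpi)} and the Leibniz rule to exhibit $q_0 b_{4,i-1}\otimes t_3^{p^i}\otimes t_2^{p^i}\otimes t_1^{p^i}$ (May degree $7p+10$) as the dominant filtration-one output, and bound every correction's contribution strictly below $7p+10$. The only difference is in that last step, where the paper runs a case analysis over monomials in $t_1,\dots,t_4$ while you instead reduce to excluding a factor $t_m^{p^\ell}$ with $m\ge 5$ by an internal-degree count that you state but do not carry out; that count does succeed for $p\ge 5$ (reducing the internal degree modulo $p-1$ shows the remaining three factors cannot exist), so this is a routine verification on par with the ``tedious but straightforward check'' the paper itself leaves to the reader.
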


\begin{proof}

We will compute $d_2^{alg}(x)$ as follows. First, we will find a representative $\Tilde{x}$ of $x$ in $\Omega_{BP_* BP}^{4,*}(BP_*)$. Afterward, we will analyze $d(\Tilde{x})$, where $d: \Omega_{BP_* BP}^{4,*}(BP_*) \to \Omega_{BP_* BP}^{5,*}(BP_*)$ denotes the differential in the cobar complex $\Omega_{BP_* BP}^{*,*}(BP_*)$.  This analysis will provide us with the necessary information about $d(\Tilde{x})$, which represents $d_2^{alg}(x) \in Ext^{5,*}_{P_*}(\mathbb{F}_{p}, I / I^2)$.

Using Lemma \ref{lem: compute d(tkpi)} and the Leibniz rule, we have 
\begin{equation}
    \begin{split}
        & d(t_4^{p^{i}} \otimes t_3^{p^{i}} \otimes t_2^{p^{i}} \otimes t_1^{p^{i}})
         = d(t_4^{p^{i}})  \otimes t_3^{p^{i}} \otimes t_2^{p^{i}} \otimes t_1^{p^{i}} - t_4^{p^{i}} \otimes d(t_3^{p^{i}}) \otimes t_2^{p^{i}} \otimes t_1^{p^{i}} \\
        & \quad \quad + t_4^{p^{i}} \otimes t_3^{p^{i}} \otimes d(t_2^{p^{i}}) \otimes t_1^{p^{i}} - t_4^{p^{i}} \otimes t_3^{p^{i}} \otimes t_2^{p^{i}} \otimes d(t_1^{p^{i}})\\
        & \quad \quad \equiv R + p b_{4, i-1} \otimes t_3^{p^{i}} \otimes t_2^{p^{i}} \otimes t_1^{p^{i}} + L \quad \text{mod} ~ I^2 \cdot BP_* BP^{\otimes 5} \\
        & \quad \quad \equiv R \quad  \text{mod} ~ I \cdot BP_* BP^{\otimes 5} \\
    \end{split}    
\end{equation}  
where we denote
\begin{equation}
    \begin{split}
        R & = (t_{3}^{p^{i}} \otimes t_1^{p^{i+3}} + t_{2}^{p^{i}} \otimes t_2^{p^{i+2}} + t_{1}^{p^{i}} \otimes t_3^{p^{i+1}})  \otimes t_3^{p^{i}} \otimes t_2^{p^{i}} \otimes t_1^{p^{i}} \\
        & - t_4^{p^{i}} \otimes (t_{2}^{p^{i}} \otimes t_1^{p^{i+2}} + t_{1}^{p^{i}} \otimes t_2^{p^{i+1}}) \otimes t_2^{p^{i}} \otimes t_1^{p^{i}} + t_4^{p^{i}} \otimes t_3^{p^{i}} \otimes t_1^{p^i} \otimes t_1^{p^{i+1}} \otimes t_1^{p^{i}}, \\
    \end{split}    
\end{equation}  
and 
\begin{equation}
    \begin{split}
        L & = - t_4^{p^{i}} \otimes p b_{3, i-1} \otimes t_2^{p^{i}} \otimes t_1^{p^{i}}  + t_4^{p^{i}} \otimes t_3^{p^{i}} \otimes p b_{2, i-1} \otimes t_1^{p^{i}} - t_4^{p^{i}} \otimes t_3^{p^{i}} \otimes t_2^{p^{i}} \otimes p b_{1, i-1}\\
    \end{split}    
\end{equation}  
which is a sum of monomials in $I \cdot BP_* BP^{\otimes 5}$ with May degrees lower than $M(p b_{4, i-1} \otimes t_3^{p^{i}} \otimes t_2^{p^{i}} \otimes t_1^{p^{i}}) = 7p + 10$.

Since $x \in Ext^{4,*}_{P_*} (\mathbb{F}_{p}, BP_* / I)$ has May filtration leading term $t_4^{p^{i}} \otimes t_3^{p^{i}} \otimes t_2^{p^{i}} \otimes t_1^{p^{i}}$, we can choose a representative $\Tilde{x}$ of $x$ in $\Omega_{BP_* BP}^{4,*}(BP_*) = BP_* BP^{\otimes 4}$ in the form of
\begin{equation}
    \Tilde{x} = t_4^{p^{i}} \otimes t_3^{p^{i}} \otimes t_2^{p^{i}} \otimes t_1^{p^{i}} - \sum_r y_r,
\end{equation}
such that: 
\begin{itemize}
    \item[(a)] each $y_r$ is a monomial in $BP_* BP^{\otimes 4}$ and is not an element of $I \cdot BP_* BP^{\otimes 4}$,
    \item[(b)] $M(y_r) < M(t_4^{p^{i}} \otimes t_3^{p^i} \otimes t_2^{p^i} \otimes t_1^{p^i}) = 7 + 5 + 3 + 1 = 16$,
    \item[(c)] $\sum_r d(y_r) \equiv  R ~ \text{mod} ~ I \cdot BP_* BP^{\otimes 5}$, ensuring that $d(\Tilde{x}) \equiv 0 ~ \text{mod} ~ I \cdot BP_* BP^{\otimes 5}$.
\end{itemize}

For each $r$, we express $d(y_r)$ as a sum of monomials in $BP_* BP^{\otimes 5}$:
\begin{equation}
    d(y_r) = \sum_u z_{r,u}.
\end{equation}
Next, we define the sets $A_r:= \{z_{r,u}| z_{r,u} \notin I \cdot BP_* BP^{\otimes 5} \}$ and $B_r:= \{z_{r,u}| z_{r,u} \in I \cdot BP_* BP^{\otimes 5},  z_{r,u} \notin I^2 \cdot BP_* BP^{\otimes 5} \}$, which correspond to the (possibly empty) sets of summands. Using these sets, we then obtain:
\begin{equation}
    0 \equiv d(\Tilde{x}) \equiv R - \sum_r \sum_{z_{r,u} \in A_r} z_{r,u} \quad \text{mod} ~ I \cdot BP_* BP^{\otimes 5}
\end{equation}
\begin{equation}
    d(\Tilde{x}) \equiv p b_{4, i-1} \otimes t_3^{p^{i}} \otimes t_2^{p^{i}} \otimes t_1^{p^{i}} + L - \sum_r \sum_{z_{r,u} \in B_r} z_{r,u} \quad \text{mod} ~ I^2 \cdot BP_* BP^{\otimes 5}
\end{equation}
Therefore, $p b_{4, i-1} \otimes t_3^{p^{i}} \otimes t_2^{p^{i}} \otimes t_1^{p^{i}} + L - \sum_r \sum_{z_{r,u} \in B_r} z_{r,u}$ represents $d_2^{alg} (x) \in Ext^{5,*}_{P_*}(\mathbb{F}_{p}, I / I^2)$. 

The condition $M(y_r) < 16$ strongly restricts the form of $y_r$. To show that $M(z_{r,u}) < M(p b_{4, i-1} \otimes t_3^{p^{i}} \otimes t_2^{p^{i}} \otimes t_1^{p^{i}}) = 7p + 10$ holds for all $z_{r,u} \in B_r$, we can conduct a tedious but straightforward check through all possible forms of $y_r$. Alternatively, we can summarize the idea as follows, considering three different cases:
\begin{itemize}
    \item[(a)] If $y_r = t_4^{p^{k}} \otimes A$ with $k \geq 1$, where $A$ is made up of $t_1$, $t_2$, and $t_3$ terms and $M(A) \leq 8$, then we have $M(z_{r,u}) \leq M(pb_{4,k-1}  \otimes A) = 7p + 1 + M(A) \leq 7p+9 < 7p + 10$.
    \item[(b)] If $y_r = t_4 \otimes A$, where $A$ is made up of $t_1$, $t_2$, and $t_3$ terms, and $M(A) \leq 8$, we note that $d(t_4) = t_{3} \otimes t_1^{p^3} + t_{2} \otimes t_2^{p^2} + t_{1} \otimes t_3^{p} - v_1 b_{3,0} - v_2 b_{2,1} - v_3 b_{1,2}$. Also, $M(b_{i,j}) = p(2i-1) \leq 5p$ for $i \leq 3$. We can then observe that $M(z_{r,u}) < 7p + 10$.
    \item[(c)] If $y_r$ is made up of $t_1$, $t_2$, and $t_3$ terms, we can also use similar ideas and check that $M(z_{r,u}) <  7p + 10$.
\end{itemize}
Thus, we conclude $d_2^{alg} (x)$ has May filtration leading term $q_0 b_{4, i-1} \otimes t_3^{p^{i}} \otimes t_2^{p^{i}} \otimes t_1^{p^{i}}$.
\end{proof}

We can compute the following differentials similarly to Proposition \ref{prop: compute alg d2 first example}.

\begin{prop}
\label{prop: compute alg d2 other examples}

We have the following secondary algebraic Novikov differentials.  
\begin{enumerate}
    \item $d_2^{alg}(t_4^{p^{i}} \otimes t_3^{p^{i+1}} \otimes t_2^{p^{i+2}} \otimes t_1^{p^{i+3}}) = q_0 b_{4, i-1} \otimes t_3^{p^{i+1}} \otimes
    t_2^{p^{i+2}} \otimes t_1^{p^{i+3}}$, for $i \geq 1$.
    \item $d_2^{alg}(t_4^{p^{i}} \otimes t_2^{p^{i}} \otimes t_1^{p^{i}} \otimes t_1^{p^{i+3}}) = q_0 b_{4, i-1} \otimes t_2^{p^{i}} \otimes t_1^{p^{i}} \otimes t_1^{p^{i+3}}$, for $i \geq 1$.
    \item $d_2^{alg}(t_3^{p^{i}} \otimes t_2^{p^{i+1}} \otimes t_2^{p^{i}} \otimes t_1^{p^{i+1}}) = q_0 b_{3, i-1} \otimes t_2^{p^{i+1}} \otimes t_2^{p^{i}} \otimes t_1^{p^{i+1}}$, for $i \geq 1$.
\end{enumerate}
Here, the equations hold after modding out lower May filtration terms.
\end{prop}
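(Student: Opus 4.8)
The plan is to run, for each of the three families, the argument of Proposition \ref{prop: compute alg d2 first example} with only the bookkeeping of exponents and May degrees changed. For an element $x$ with the stated May leading term $\ell$, I would choose a representative $\Tilde{x} = \ell - \sum_r y_r$ in $\Omega_{BP_* BP}^{4,*}(BP_*)$, where each $y_r$ is a monomial outside $I \cdot BP_* BP^{\otimes 4}$ with $M(y_r) < M(\ell)$, arranged so that $d(\Tilde{x}) \equiv 0$ mod $I$. Applying the Leibniz rule and Lemma \ref{lem: compute d(tkpi)} to $\ell$, the decisive summand comes from differentiating the leftmost tensor factor $t_m^{p^i}$, where $m = 4$ for families (1) and (2) and $m = 3$ for family (3); by Lemma \ref{lem: compute d(tkpi)} this produces $p\,b_{m,i-1}$, which becomes $q_0\,b_{m,i-1}$ after identifying the $I/I^2$ part of the cobar complex with $\Omega_{P_*}(I/I^2)$ via $p = v_0 \mapsto q_0$. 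This yields exactly the claimed leading term, and it remains to show that no other contribution landing in $I/I^2$ has May filtration as large.

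The key observation, which I would make uniformly for all three families, is that the corrections never introduce a tensor factor $t_n$ with $n > m$. Indeed, differentiating any factor $t_a^{p^j}$ of $\ell$ (so $a \leq m$) produces, modulo $I$, only factors of index strictly below $a$, so the mod-$I$ reduction $R$ of $d(\ell)$ lies in $P[t_1, \dots, t_m]^{\otimes 5}$; since $P[t_1, \dots, t_m]$ is a sub-Hopf-algebra of $P_*$ (its coproducts close up within $t_1, \dots, t_m$), the representative $\Tilde{x}$ can be taken inside $P[t_1, \dots, t_m]^{\otimes 4}$, so every factor of every $y_r$ has index at most $m$. Consequently, the only $I/I^2$-contributions carrying a factor $b_m$ arise by differentiating a $t_m^{p^k}$ factor, and such a term coming from a monomial $w$ has May degree $M(w) + 1 + (p-1)(2m-1)$. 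Since $\ell$ has the strictly largest May degree among $\ell$ and the $y_r$, the $b_m$-term from $\ell$ — precisely the claimed leading term — strictly dominates all other $b_m$-terms. Every remaining contribution contains only $b_n$ with $n < m$, so its $b$-part has May degree at most $p(2m-3)$, a drop of $2p$ from $p(2m-1)$ that is never recovered by the bounded May degrees of the surviving $t$-factors (one checks, e.g., $3p + 10 < 5p + 8$ in family (3) and $5p + 12 < 7p + 10$ in family (1)).

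The crux of the argument — and where the three families genuinely differ — is the confinement to the sub-Hopf-algebra $P[t_1, \dots, t_m]$. For families (1) and (2) the principal factor is already $t_4$, so admitting $t_4$-corrections is harmless and the bound reproduces the leading degrees $7p + 10$ and $7p + 6$ exactly as in Proposition \ref{prop: compute alg d2 first example}. Family (3) is the delicate one: its leading $b_3$-term has May degree only $5p + 8$, which is smaller than the $7p$ carried by a $b_4$, so if even one correction term contained a factor $t_4^{p^k}$ with $k \geq 1$, differentiating it would create a $b_4$-term of May degree up to $7p + 5 > 5p + 8$ and destroy the conclusion. The reason this cannot happen is exactly the confinement above: since the leftmost factor of $\ell$ is $t_3$ rather than $t_4$, the reduction $R$ lives in $P[t_1, t_2, t_3]$, and no $t_4$ is ever forced into $\Tilde{x}$. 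Establishing this confinement, together with the routine May-degree comparisons it enables, is the main obstacle; once it is in place, the stated formulas for $d_2^{alg}$ follow just as in the first example.
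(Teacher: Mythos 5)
Your overall strategy is the paper's (its proof of this proposition is literally ``computed directly analogous to Proposition~\ref{prop: compute alg d2 first example}''), and you have correctly isolated the one place where ``analogous'' is not automatic: in family (3) the target $b_3$-term has May degree only $5p+8$, so a correction monomial $y_r$ containing a factor $t_4^{p^k}$ with $k\geq 1$ would, upon differentiation, contribute a $b_4$-term of May degree up to $11-7+(7p+1)=7p+5>5p+8$ and overwhelm the claimed leading term. The gap is in how you rule this out. Your confinement claim --- that because the mod-$I$ reduction $R$ of $d(\ell)$ lies in $P[t_1,\dots,t_m]^{\otimes 5}$ and $P[t_1,\dots,t_m]$ is a sub-Hopf-algebra, the corrections $y_r$ may be chosen inside $P[t_1,\dots,t_m]^{\otimes 4}$ --- does not follow from the stated reasons. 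What you are given is that $R$ is a coboundary of elements of strictly lower May filtration in the cobar complex of $P_*$ (this is exactly the hypothesis that a class $x$ with May leading term $\ell$ exists); you need the same statement in the cobar complex of the sub-Hopf-algebra. The inclusion $\Omega_{P[t_1,\dots,t_m]}(\mathbb{F}_p)\hookrightarrow\Omega_{P_*}(\mathbb{F}_p)$ is injective on May $E_1$-pages, but the induced maps on higher May pages (equivalently on cohomology) need not be injective: a cocycle of the subcomplex can become a coboundary in the larger complex only via a cochain genuinely involving $t_n$ with $n>m$. So the confinement is an assertion requiring proof, and the sub-Hopf-algebra property alone does not supply one.

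The way to close the gap --- and what the ``tedious but straightforward check'' of Proposition~\ref{prop: compute alg d2 first example} amounts to here --- is to exclude $t_4$-containing monomials $y_r$ in family (3) by inner degree. In units of $q=2(p-1)$ the internal degree of $\ell=t_3^{p^{i}}\otimes t_2^{p^{i+1}}\otimes t_2^{p^{i}}\otimes t_1^{p^{i+1}}$ is $2p^i+4p^{i+1}+2p^{i+2}$, a sum of eight powers of $p$ supported on only the three exponents $i,i+1,i+2$. A monomial of May degree at most $11$ with a factor $t_4^{p^k}$ in one tensor slot leaves total May degree at most $4$ for everything else, hence at most four further powers of $p$ (each $t_n^{p^a}$ contributes $n$ powers and has May degree $2n-1\geq n$); matching eight powers against eight forbids carrying, so the four consecutive exponents $k,k+1,k+2,k+3$ would have to sit inside $\{i,i+1,i+2\}$, which is impossible. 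Once this exclusion is in place your May-degree comparisons ($3p+10<5p+8$ for the $b_1,b_2$-contributions, and likewise in the other families) are correct and complete the argument; note that for families (1) and (2) no exclusion is needed at all, since even a $t_4^{p^k}$-correction contributes at most $7p+9<7p+10$, respectively $7p+5<7p+6$, exactly as in the paper's treatment of the first example.
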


\begin{proof}
These results can be computed directly analogous to Proposition \ref{prop: compute alg d2 first example}.
\end{proof}

\begin{thm}
\label{thm: new adams d2 fourth line}
There are nontrivial secondary Adams differentials given as follows:
\begin{enumerate}
    \item $d_2^{Adams} (h_{4, i} h_{3, i} g_i) = a_0 b_{4,i-1} h_{3, i} g_i$, for $i \geq 1$.
    \item $d_2^{Adams} (h_{4, i} h_{3, i+1} k_{i+2}) = a_0 b_{4,i-1} h_{3, i+1} k_{i+2}$, for $i \geq 1$.
    \item $d_2^{Adams} (h_{4, i} g_i h_{i+3}) = a_0 b_{4,i-1} g_i h_{i+3}$, for $i \geq 1$.
    \item $d_2^{Adams} (h_{3, i} h_{2, i+1} k_{i}) = a_0 b_{3,i-1} h_{2, i+1} k_{i}$, for $i \geq 1$.
\end{enumerate}
\end{thm}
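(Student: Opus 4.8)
The plan is to deduce all four families at once from the algebraic Novikov computations of Propositions \ref{prop: compute alg d2 first example} and \ref{prop: compute alg d2 other examples} via the transition Theorem \ref{thm: miller transition}, following the four-step strategy stated at the start of this section. I would treat the families uniformly, spelling out family (1) in full, since (2)--(4) are handled verbatim.

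First I would fix family (1): let $x \in Ext_{\mathcal{A}_*}^{4,*}(\mathbb{F}_p, \mathbb{F}_p)$ be the class whose MSS representative is $l = h_{4,i} h_{3,i} h_{2,i} h_{1,i}$, as recorded in Table \ref{table: fourth line elements representatives}. Passing to the algebraic Novikov side through Notation \ref{notation: denote E2 element by cobar} and the isomorphism \eqref{eq: iso of E2}, the class $x$ is detected in the CESS by an element $x' \in Ext^{4,*}_{P_*}(\mathbb{F}_p, BP_*/I)$ whose May filtration leading term is the corresponding algNSS term $l' = t_4^{p^i} \otimes t_3^{p^i} \otimes t_2^{p^i} \otimes t_1^{p^i}$; this identification of leading terms is exactly the agreement of May filtrations between the algNSS and ASS $E_2$-pages established at the end of Section \ref{sec: spectral sequence}. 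Since $x'$ satisfies the hypothesis of Proposition \ref{prop: compute alg d2 first example}, that proposition yields that $d_2^{alg}(x')$ has May filtration leading term $q_0 b_{4,i-1} \otimes t_3^{p^i} \otimes t_2^{p^i} \otimes t_1^{p^i}$.

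Next I would invoke Theorem \ref{thm: miller transition}, by which $d_2^{alg}(x')$ represents $d_2^{Adams}(x)$, and then read off the MSS representative of the target from its algNSS leading term. Under the dictionary $q_j \leftrightarrow a_j$, $t_j^{p^k} \leftrightarrow h_{j,k}$, and $b_{j,k} \leftrightarrow b_{j,k}$ (Notation \ref{notation: denote E2 element by cobar} and Remark \ref{rem: two bij}), the leading term $q_0 b_{4,i-1} \otimes t_3^{p^i} \otimes t_2^{p^i} \otimes t_1^{p^i}$ translates to $a_0 b_{4,i-1} h_{3,i} h_{2,i} h_{1,i} = a_0 b_{4,i-1} h_{3,i} g_i$, which is the assertion of part (1). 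Families (2), (3), (4) then follow identically, replacing the Table row and the Proposition part: rows two through four of Table \ref{table: fourth line elements representatives} feed into parts (1), (2), (3) of Proposition \ref{prop: compute alg d2 other examples}, whose leading terms $q_0 b_{4,i-1}\otimes(\cdots)$ and $q_0 b_{3,i-1}\otimes(\cdots)$ translate under the same dictionary into the claimed MSS representatives. Along the way I would record nontriviality: by the $E_2$-page isomorphism of Theorem \ref{thm: CESS props}, a nonvanishing May filtration leading term detects a nonzero class in $Ext^{5,*}_{P_*}(\mathbb{F}_p, I/I^2)$, hence a nonzero element of the Adams fourth line, so that each differential is genuinely nontrivial.

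The only real difficulty is already absorbed into Propositions \ref{prop: compute alg d2 first example} and \ref{prop: compute alg d2 other examples}: one must verify that no lower-May-degree correction term $y_r$ added to the cobar representative can contribute a summand of May degree equal to that of $q_0 b_{4,i-1}\otimes(\cdots)$, so that the displayed term is truly the May filtration leading term of $d_2^{alg}$ rather than being cancelled. Granting that case analysis, the theorem itself is a pure translation through the transition theorem, requiring no further topological input; within the theorem's own proof the one point demanding care is invoking the May filtration agreement so that the algNSS leading term correctly determines the MSS representative of the Adams target.
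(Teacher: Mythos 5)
Your translation step matches the paper's: for each family you feed the corresponding row of Table \ref{table: fourth line elements representatives} into Propositions \ref{prop: compute alg d2 first example} and \ref{prop: compute alg d2 other examples}, and convert the resulting algebraic Novikov leading terms back into May spectral sequence representatives via Theorem \ref{thm: miller transition} and Notations \ref{notation: denote E2 element by cobar}. That part is exactly how the paper argues, and the heavy lifting is indeed absorbed into the two propositions as you say.

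The gap is in your nontriviality claim. You assert that ``a nonvanishing May filtration leading term detects a nonzero class,'' but this is not a valid inference: a cocycle in the cobar complex can have a nonzero May filtration leading term and still be a coboundary, in which case that leading term is a boundary in the May spectral sequence and the cohomology class is zero --- so the Adams differential would be trivial after all. (You also misplace the target: $d_2^{Adams}$ raises Adams filtration by $2$, so the targets live in $Ext_{\mathcal{A}_*}^{6,*}$, not on the fourth line.) What is actually needed, and what the paper supplies, is a degree inspection in the May spectral sequence. For the target $a_0 b_{4,i-1} h_{3,i} h_{2,i} h_{1,i} \in E_1^{6,t,M}$, where $t = 1 + qp^i\bigl((1+p+p^2+p^3)+(1+p+p^2)+(1+p)+1\bigr)$ and $q = 2(p-1)$, one checks that the only element of $E_1^{5,t,*}$ is $a_0 h_{4,i} h_{3,i} h_{2,i} h_{1,i}$, whose May degree is strictly smaller than $M$; since the May differentials $d_r: E_r^{5,t,M+r} \to E_r^{6,t,M}$ lower May degree, nothing can hit the target, so it survives and represents a nonzero class in $Ext_{\mathcal{A}_*}^{6,*}$. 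The analogous inspection is needed for the other three families. Without this (or some substitute for it), your argument establishes the formulas for $d_2^{Adams}$ only up to the possibility that the right-hand sides vanish, which is precisely the content of the word ``nontrivial'' in the statement.
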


\begin{proof}
These results can be directly deduced from Propositions \ref{prop: compute alg d2 first example} and \ref{prop: compute alg d2 other examples}.  Moreover, these differentials are all nontrivial.   We can take $a_0 b_{4,i-1} h_{3, i} g_i$ as an example to show $a_0 b_{4,i-1} h_{3, i} g_i \neq 0 \in Ext_{\mathcal{A}_*}^{6,*}$.  The other three cases are similar.

Note $a_0 b_{4,i-1} h_{3, i} g_i$ has May spectral sequence representative 
$$a_0 b_{4, i-1} h_{3, i} h_{2, i} h_{1, i} \in E_1^{6,t,M}$$
Here the inner degree is
$$t = 1 + qp^i((1+p+p^2+p^3)+(1+p+p^2)+(1+p)+1)$$
where we denote $q = 2(p-1)$.  Let $x$ be an element in 
$E_1^{5,t,*}$.  Inspection of degrees shows $x$ must be $a_0 h_{4, i} h_{3, i} h_{2, i} h_{1, i}$.  Then $M(x) < M(a_0 b_{4, i-1} h_{3, i} h_{2, i} h_{1, i})$.  Hence $a_0 b_{4, i-1} h_{3, i} h_{2, i} h_{1, i}$ can not be the image of any May differential 
$d_r: E_r^{5, t, M+r} \to E_r^{6, t, M}$, $r \geq 1$.
This completes the proof.
\end{proof} 

It is worth pointing out that Zhong-Hong-Zhao \cite{Zhong_Hong_Zhao} also computed two other nontrivial differentials on the fourth line.

\begin{thm}[Zhong-Hong-Zhao \cite{Zhong_Hong_Zhao}]
\label{thm: 2 fourth line adams d2}
On the fourth line $Ext_{\mathcal{A}_*}^{4,*} (\mathbb{F}_p, \mathbb{F}_p)$ of the Adams spectral sequence, there exist two nontrivial secondary Adams differentials given as follows:
\begin{enumerate}
    \item $d_2^{Adams} (h_{3,i} g_i h_{2,i-1}) = a_0 b_{3,i-1} g_i h_{2,i-1}$ for $i \geq 2$.
    \item $d_2^{Adams} (h_{3,i} k_{i+1} h_{2,i+2}) = a_0 b_{3,i-1} k_{i+1} h_{2,i+2}$ for $i \geq 1$.
\end{enumerate}

\end{thm}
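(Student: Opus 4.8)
\emph{Proof sketch.} The plan is to establish both families exactly as in Theorem~\ref{thm: new adams d2 fourth line}: compute the corresponding secondary algebraic Novikov differential $d_2^{alg}$ and transport it to the Adams spectral sequence through Theorem~\ref{thm: miller transition} and the cobar-level dictionary of Notations~\ref{notation: denote E2 element by cobar}. Both elements have the same shape (a top class $h_{3,i}$ together with three lower-weight $h$'s) as the families already treated, so the argument of Proposition~\ref{prop: compute alg d2 first example} should apply essentially verbatim.

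First I would translate everything into the cobar complex. Writing $g_i = h_{2,i} h_{1,i}$ and $k_j = h_{2,j} h_{1,j+1}$, the class $h_{3,i} g_i h_{2,i-1}$ has MSS representative $h_{3,i} h_{2,i} h_{1,i} h_{2,i-1}$ and corresponds to the algNSS leading term $t_3^{p^i} \otimes t_2^{p^i} \otimes t_1^{p^i} \otimes t_2^{p^{i-1}}$, while $h_{3,i} k_{i+1} h_{2,i+2}$ has MSS representative $h_{3,i} h_{2,i+1} h_{1,i+2} h_{2,i+2}$ and corresponds to $t_3^{p^i} \otimes t_2^{p^{i+1}} \otimes t_1^{p^{i+2}} \otimes t_2^{p^{i+2}}$.

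Next I would run the computation of Proposition~\ref{prop: compute alg d2 first example}. Choose a cobar representative $\Tilde{x} \in \Omega_{BP_* BP}^{4,*}(BP_*)$ whose leading term is the displayed tensor and whose remaining monomials lie outside $I \cdot BP_* BP^{\otimes 4}$ with strictly smaller May degree. By Lemma~\ref{lem: compute d(tkpi)} and the Leibniz rule, the factor $d(t_3^{p^i})$ contributes the summand $p\, b_{3,i-1} \otimes (\text{remaining tensor})$, which reduces modulo $I^2 \cdot BP_* BP^{\otimes 5}$ to $q_0 b_{3,i-1} \otimes (\text{remaining tensor})$; this is the candidate May filtration leading term of $d_2^{alg}(x)$. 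The restriction $i \geq 2$ in part (1) guarantees that every $t$-power in the leading tensor has exponent at least $p$, so that Lemma~\ref{lem: compute d(tkpi)} applies to all four factors and $b_{2,i-2}$ is defined; for part (2) the condition $i \geq 1$ suffices.

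The main obstacle, as in Proposition~\ref{prop: compute alg d2 first example}, is the May-degree bookkeeping needed to confirm that this candidate really is the leading term: one must check that every other monomial of $d(\Tilde{x})$ lying in $I \cdot BP_* BP^{\otimes 5} \setminus I^2 \cdot BP_* BP^{\otimes 5}$ has strictly smaller May degree. The candidate has May degree $1 + 5p + 3 + 1 + 3 = 5p+8$, whereas differentiating any lower $t_2$ or $t_1$ factor produces a $b_{2,*}$ (May degree $3p$) or $b_{1,*}$ (May degree $p$) in place of the $b_{3,i-1}$ (May degree $5p$), giving a strictly smaller total; and any correction monomial $y_r$, having $M(y_r) < 12$, contributes at most $1 + 5p + 6 = 5p+7 < 5p+8$ even when its differential reaches a leading $t_3$ factor. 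Granting this, Theorem~\ref{thm: miller transition} identifies the candidate with the leading term of $d_2^{Adams}(x)$, yielding $a_0 b_{3,i-1} g_i h_{2,i-1}$ and $a_0 b_{3,i-1} k_{i+1} h_{2,i+2}$ respectively. Nontriviality then follows from the closing argument of Theorem~\ref{thm: new adams d2 fourth line}: a degree inspection shows that no class of $E_1^{5,t,*}$ of the MSS in the relevant internal degree reaches May degree $5p+8$ (the dominant one being $a_0 h_{3,i} h_{2,i} h_{1,i} h_{2,i-1}$, of May degree $13$), so the target is not the image of any May differential and is therefore nonzero in $Ext_{\mathcal{A}_*}^{6,*}$.
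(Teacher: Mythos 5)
Your proposal is correct and follows essentially the same route as the paper: the paper recovers Theorem \ref{thm: 2 fourth line adams d2} by computing exactly the two algebraic Novikov differentials you identify (its Proposition \ref{prop: alg for 2 fourth line adams d2}, with the same cobar representatives $t_3^{p^i}\otimes t_2^{p^i}\otimes t_1^{p^i}\otimes t_2^{p^{i-1}}$ and $t_3^{p^i}\otimes t_2^{p^{i+1}}\otimes t_1^{p^{i+2}}\otimes t_2^{p^{i+2}}$) ``analogously to Proposition \ref{prop: compute alg d2 first example}'' and then applying Theorem \ref{thm: miller transition}. Your added May-degree bookkeeping and the observation that $i\geq 2$ is needed in part (1) to avoid the $d(t_2)=t_1\otimes t_1^p-v_1b_{1,0}$ phenomenon are consistent with, and slightly more explicit than, what the paper records.
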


Their result can be recovered by computing the following corresponding algebraic Novikov differentials.

\begin{prop}
\label{prop: alg for 2 fourth line adams d2}
We have the following secondary algebraic Novikov differentials.
Here, the equations hold after modding out lower May filtration terms.
\begin{enumerate}
    \item $d_2^{alg}(t_3^{p^{i}} \otimes t_2^{p^{i}} \otimes t_1^{p^{i}} \otimes t_2^{p^{i-1}}) = q_0 b_{3, i-1} \otimes t_2^{p^{i}} \otimes t_1^{p^{i}} \otimes t_2^{p^{i-1}}$, for $i \geq 2$.
    \item $d_2^{alg}(t_3^{p^{i}} \otimes t_2^{p^{i+1}} \otimes t_1^{p^{i+2}} \otimes t_2^{p^{i+2}}) = q_0 b_{3, i-1} \otimes t_2^{p^{i+1}} \otimes t_1^{p^{i+2}} \otimes t_2^{p^{i+2}}$, for $i \geq 1$.
\end{enumerate}

\end{prop}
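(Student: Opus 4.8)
The plan is to prove both differentials by following the method of Proposition~\ref{prop: compute alg d2 first example} verbatim, treating the two cases independently. For case (1), I would first choose a cobar representative $\Tilde{x} \in \Omega_{BP_* BP}^{4,*}(BP_*)$ of the algNSS class $x$ whose May filtration leading term is $t_3^{p^{i}} \otimes t_2^{p^{i}} \otimes t_1^{p^{i}} \otimes t_2^{p^{i-1}}$, written in the form $\Tilde{x} = t_3^{p^{i}} \otimes t_2^{p^{i}} \otimes t_1^{p^{i}} \otimes t_2^{p^{i-1}} - \sum_r y_r$, where each $y_r$ is a monomial in $BP_* BP^{\otimes 4}$ with $y_r \notin I \cdot BP_* BP^{\otimes 4}$ and $M(y_r) < 12$, and where $\sum_r d(y_r)$ is arranged to cancel the $\mathrm{mod}~I$ part of $d$ of the leading term, so that $d(\Tilde{x}) \equiv 0 \bmod I \cdot BP_* BP^{\otimes 5}$. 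Case (2) is handled identically, with $t_2^{p^{i}} \otimes t_1^{p^{i}} \otimes t_2^{p^{i-1}}$ replaced throughout by $t_2^{p^{i+1}} \otimes t_1^{p^{i+2}} \otimes t_2^{p^{i+2}}$; the two computations have the same combinatorial shape and the same May degrees.

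Applying Lemma~\ref{lem: compute d(tkpi)} and the Leibniz rule to the leading term, the unique contribution lying in $I \cdot BP_* BP^{\otimes 5}$ but not in $I^2$ that survives at top May filtration comes from the $p b_{3, i-1}$ summand of $d(t_3^{p^{i}})$ in the first tensor factor. This produces the claimed leading term $q_0 b_{3, i-1} \otimes t_2^{p^{i}} \otimes t_1^{p^{i}} \otimes t_2^{p^{i-1}}$ (and its case (2) analogue $q_0 b_{3, i-1} \otimes t_2^{p^{i+1}} \otimes t_1^{p^{i+2}} \otimes t_2^{p^{i+2}}$), whose May filtration is $1 + 5p + 3 + 1 + 3 = 5p + 8$, using $M(q_0) = 1$, $M(b_{3,i-1}) = 5p$, and $M(t_m^{p^j}) = 2m - 1$ as recorded in Theorem~\ref{thm: MSS}.

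The heart of the argument, and the main obstacle, is verifying that every other monomial appearing in $d(\Tilde{x})$ modulo $I^2 \cdot BP_* BP^{\otimes 5}$ has May filtration strictly below $5p + 8$, so that $q_0 b_{3,i-1} \otimes t_2^{p^{i}} \otimes t_1^{p^{i}} \otimes t_2^{p^{i-1}}$ is genuinely the leading term of $d_2^{alg}(x)$. These break into two families, exactly as in Proposition~\ref{prop: compute alg d2 first example}. The $L$-type terms arise from the $p b_{m, \cdot}$ summands of $d(t_m^{p^j})$ in the inner factors; the largest such term replaces an inner $t_2^{p^j}$ (May degree $3$) by $b_{2, \cdot}$ (May degree $3p$), giving May filtration $5 + 3p + 1 + 3 = 3p + 9$, which is strictly less than $5p + 8$ since $p \geq 5$. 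The remaining terms come from $\sum_r d(y_r)$: since each $y_r$ satisfies $M(y_r) < 12$ and lies outside $I \cdot BP_* BP^{\otimes 4}$, a case analysis on the shape of $y_r$ — according to whether its entry in the first slot is $t_3^{p^k}$ with $k \geq 1$, is $t_3$, or is built only from $t_1, t_2$ factors, paralleling parts (a)--(c) of Proposition~\ref{prop: compute alg d2 first example} — combined with the bound $M(b_{m,j}) = p(2m-1) \leq 5p$ for $m \leq 3$, shows that each resulting monomial lying in $I \setminus I^2$ has May filtration strictly below $5p + 8$. Collecting these bounds, the surviving top May filtration term of a representative of $d_2^{alg}(x)$ is exactly $q_0 b_{3, i-1} \otimes t_2^{p^{i}} \otimes t_1^{p^{i}} \otimes t_2^{p^{i-1}}$ (resp. the case (2) analogue), which establishes both differentials.
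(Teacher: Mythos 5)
Your proposal is correct and follows exactly the route the paper takes: the paper's own proof of this proposition is a one-line reference to the method of Proposition \ref{prop: compute alg d2 first example}, and that is precisely the argument you carry out (same choice of representative, same $R$/$L$ decomposition, same case analysis on the $y_r$). The only quibble is a harmless off-by-one in the May degree of the $L$-type terms: $t_3^{p^{i}} \otimes p\, b_{2,i-1} \otimes t_1^{p^{i}} \otimes t_2^{p^{i-1}}$ has May filtration $5+(1+3p)+1+3 = 3p+10$ rather than $3p+9$, since the factor of $p$ contributes $M(q_0)=1$; the required inequality $3p+10 < 5p+8$ still holds.
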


\begin{proof}
These results can be computed directly analogous to Proposition \ref{prop: compute alg d2 first example}.
\end{proof}

Our computations here are comparatively more straightforward than the original computations in \cite{Zhong_Hong_Zhao} using matrix Massey products.

\section{Secondary Adams differentials on the first three lines}
\label{sec: first three lines}

In this section, we use the strategy explained in Section \ref{sec: fourth line} to recover secondary Adams differentials on the first three lines.

The generators for the first two lines of the Adams spectral sequence were determined by Liulevicius in \cite{Liulevicius_factorization}.
We summarize them in the following table.

\renewcommand{\arraystretch}{1.2}

\begin{longtable}{| c | c | c | c |}
 \hline
 Generator & Representation in MSS & Inner Degree & Range of indices\\ 
 \hline
 $a_0$ & $a_0$ & 1 &  \\
 \hline
 $h_i$ & $h_{1,i}$ & $qp^i$ & $i \geq 0$ \\
 \hline
 $a_1 h_0$ & $a_1 h_{1,0}$ & $2q+1$ &  \\
 \hline 
 $a_0^2$ & $a_0^2$ & $2$ &  \\
 \hline
 $a_0 h_i$ & $a_0 h_{1,i}$ & $qp^i+1$ & $i \geq 1$ \\
 \hline
 $g_i$ & $h_{2,i} h_{1,i}$ & $q(2p^i+p^{i+1})$ & $i \geq 0$ \\
 \hline 
 $k_i$ & $h_{2,i} h_{1,i+1}$ & $q(p^i+2p^{i+1})$ & $i \geq 0$ \\
 \hline  
 $b_i$ & $b_{1,i}$ & $qp^{i+1}$ & $i \geq 0$ \\
 \hline   
 $h_i h_j$ & $h_{1,i} h_{1,j}$ & $q(p^i+p^{j})$ & $j-2 \geq i \geq 0$\\
 \hline   
%\end{tabular}
%\caption{A $\mathbb{F}_p$-basis of $Ext_{\mathcal{A}_*}^{1,*}$ and $Ext_{\mathcal{A}_*}^{2,*}$}
%\label{table: first and second line basis}
%\end{center}
\caption{\label{table: first and second line basis} A $\mathbb{F}_p$-basis of $Ext_{\mathcal{A}_*}^{1,*}$ and $Ext_{\mathcal{A}_*}^{2,*}$}
\end{longtable}

For odd primes, Aikawa \cite{Aikawa_3_dimensional} determined a basis for $Ext_{\mathcal{A}_*}^{3,*}$ using $\Lambda$-algebra.  For $p \geq 5$, Wang \cite{Wang_third_line_adams} determined the May spectral sequence representatives of the generators. The result is summarized in the following table.

%\begin{table}[ht]
%\begin{center}
%\begin{tabular}{| c | c | c | c |} 
\begin{longtable}{| c | c | c | c |}
 \hline
 Generator & MSS Representation & Inner Degree & Range of indices\\ 
 \hline
 $h_i h_j h_k$ & $h_{1,i} h_{1,j} h_{1,k}$ & $q(p^i+p^j+p^k)$ &   $k-4 \geq j-2 \geq i \geq 0$ \\
 \hline 
 $a_0 h_i h_j$ & $a_0 h_{1,i} h_{1,j}$ & $q(p^i+p^j) +1$ &  $j-2 \geq i \geq 1$ \\
 \hline
 $a_0^2 h_i$ & $a_0^2 h_{1,i}$ & $qp^i+2$ & $i \geq 1$ \\
 \hline
 $a_0^3$ & $a_0^3$ & $3$ &  \\
 \hline 
 $b_i h_j$ & $b_{1,i} h_{1,j}$ & $q(p^{i+1}+p^j)$ & $i,j \geq 0, j \neq i+2$ \\
 \hline  
 $a_0 b_i$ & $a_0 b_{1,i}$ & $qp^{i+1}+1$ & $i \geq 1$ \\
 \hline   
 $g_i h_j$ & $h_{2,i} h_{1,i} h_{1,j}$ & $q(2p^i+p^{i+1}+p^j)$ & $j \neq i+2,i,i-1,$ \\
  &  &  & and $i,j \geq 0$ \\
 \hline   
 $g_i a_0$ & $h_{2,i} h_{1,i} a_0$ & $q(2p^i+p^{i+1})+1$ & $i \geq 1$ \\
 \hline    
 $k_i h_j$ & $h_{2,i}h_{1,i+1}h_{1,j}$ & $q(p^i+2p^{i+1}+p^j)$ & $j \neq i+2, i\pm 1,i,$ \\
   &  &  & and $i,j \geq 0$ \\
 \hline   
 $k_i a_0$ & $h_{2,i} h_{1,i+1} a_0$ & $q(p^i+2p^{i+1})+1$ & $i \geq 1$ \\
 \hline  
 $a_1 h_0 h_j$ & $a_1 h_{1,0} h_{1,j}$ & $q(2+p^j)+1$ & $j \geq 2$ \\
 \hline 
 $h_{3,i} g_i$ & $h_{3,i} h_{2,i} h_{1,i}$ & $q(3p^{i}+ 2p^{i+1}+p^{i+2})$ & $i \geq 0$ \\
 \hline  
 $a_2 k_0$ & $a_2 h_{2,0} h_{1,1}$ & $q(2+3p)+1$ &  \\
 \hline   
 $h_{2,i} g_{i+1}$ & $h_{2,i} h_{2,i+1} h_{1,i+1}$ & $q(p^{i}+ 3p^{i+1}+p^{i+2})$ & $i \geq 0$ \\
 \hline 
 $a_1 g_0$ & $a_1 h_{2,0} h_{1,0}$ & $q(3+p)+1$ &  \\
 \hline  
 $h_{3,i} h_{i+2} h_i$ & $h_{3,i} h_{1,i+2} h_{1,i}$ & $q(2p^i+p^{i+1}+2p^{i+2})$ &  $i \geq 0$ \\
 \hline  
 $h_{3,i} k_{i+1}$ & $h_{3,i} h_{2,i+1} h_{1,i+2}$ & $q(p^i+2p^{i+1}+3p^{i+2})$ &  $i \geq 0$ \\
 \hline  
 $a_1^2 h_0$ & $a_1^2 h_{1,0}$ & $3q+2$ &  \\
 \hline 
 $b_{2,i} h_{i+1}$ & $b_{2,i} h_{1,i+1}$ & $q(2p^{i+1}+p^{i+2})$ & $i \geq 0$ \\
 \hline  
 $b_{2,i} h_{i+2}$ & $b_{2,i} h_{1,i+2}$ & $q(p^{i+1}+2p^{i+2})$ & $i \geq 0$ \\
 \hline   
%\end{tabular}
%\caption{A $\mathbb{F}_p$-basis of $Ext_{\mathcal{A}_*}^{3,*}$}
%\label{table: third line basis}
%\end{center}
%\end{table}
\caption{\label{table: third line basis} A $\mathbb{F}_p$-basis of $Ext_{\mathcal{A}_*}^{3,*}$}
\end{longtable}

We can compute $d_2^{Adams}$ for the basis elements in Table \ref{table: first and second line basis} via computing $d_2^{alg}$ of their corresponding elements.  For simplicity, we only list the nontrivial $d_2^{alg}$ differentials here.

\begin{prop}
\label{prop: first and second line alg d2}
Let $p$ be an odd prime.
Amongst the elements in the algebraic Novikov spectral sequence that corresponds to the first and second line basis listed in Table \ref{table: first and second line basis}, all nontrivial $d_2^{alg}$'s are summarized as follows.
Here, the equations hold after modding out lower May filtration terms.

\begin{enumerate}
  \item $d_2^{alg} (t_1^{p^i}) = q_0 b_{1,i-1}$, for $i>0$.
  \item $d_2^{alg} (p t_1^{p^i}) =  q_0^2 b_{1,i-1}$, $i \geq 1$.
  \item $d_2^{alg}(t_2^{p^{i}}\otimes t_1^{p^{i}}) =  q_0 b_{2,i-1}\otimes t_1^{p^{i}}$, $i \geq 1$.
  \item $d_2^{alg} (t_2 \otimes t_1) = - q_1 b_{1,0} \otimes t_1$.
  \item $d_2^{alg}(t_2^{p^{i}}\otimes t_1^{p^{i+1}}) =  q_0 b_{2, i-1} \otimes t_1^{p^{i+1}}$, $i \geq 1$.
  \item $d_2^{alg} (t_1^{p^i} \otimes t_1^{p^j}) = q_0 b_{1,i-1} \otimes t_1^{p^j} - t_1^{p^i} \otimes q_0 b_{1,j-1}$, $j-2 \geq i \geq 1$.
\end{enumerate}

\end{prop}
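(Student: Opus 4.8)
The plan is to combine two ingredients: the lift-and-differentiate procedure of Proposition \ref{prop: compute alg d2 first example}, and the fact that $d_2^{alg}$ is a derivation, since the algNSS is multiplicative. For a class $x$ with prescribed leading May term $l'$, I would lift $l'$ to a representative $\tilde x = l' - \sum_r y_r \in \Omega^{s,*}_{BP_* BP}(BP_*)$, with each $y_r$ of strictly lower May filtration, chosen so that $d(\tilde x)$ vanishes modulo the power of $I$ one above the source. As argued in Proposition \ref{prop: compute alg d2 first example}, the corrections $y_r$ feed only lower-May-filtration terms into the next $I$-adic graded piece, so the leading May term of $d_2^{alg}(x)$ is read off from $d(l')$ alone. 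Multiplicativity then lets me reduce the decomposable cases to a single genuinely new computation.

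That new computation is family (1): here $l' = t_1^{p^i}$ with $i \ge 1$, and Lemma \ref{lem: compute d(tkpi)} gives $d(t_1^{p^i}) \equiv p\,b_{1,i-1}$ modulo $I^2$ (the tensor sum in the lemma is empty for $n=1$), so that $d_2^{alg}(t_1^{p^i}) = q_0 b_{1,i-1}$ after translating $p = v_0 \leftrightarrow q_0$. Families (2) and (6) then follow by the Leibniz rule from (1): since $a_0 = v_0$ and $h_0 = t_1$ are cobar cocycles ($d(v_0)=0$ and $t_1$ is primitive), one gets $d_2^{alg}(v_0 t_1^{p^i}) = v_0\, d_2^{alg}(t_1^{p^i}) = q_0^2 b_{1,i-1}$, and $d_2^{alg}(t_1^{p^i}\otimes t_1^{p^j}) = q_0 b_{1,i-1}\otimes t_1^{p^j} - t_1^{p^i}\otimes q_0 b_{1,j-1}$.

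Families (3), (4), (5) cannot be obtained this way, because $h_{2,i} = [t_2^{p^i}]$ is not itself a cocycle, so I would compute them directly. For (3), the Leibniz expansion of $d(t_2^{p^i}\otimes t_1^{p^i})$ together with Lemma \ref{lem: compute d(tkpi)} produces, in the first nontrivial $I$-adic jump, the two terms $p\,b_{2,i-1}\otimes t_1^{p^i}$ and $-\,t_2^{p^i}\otimes p\,b_{1,i-1}$; since $M(b_{2,i-1}) = 3p$ dominates $M(b_{1,i-1}) = p$ for $p \ge 5$, the first survives and gives the stated $q_0 b_{2,i-1}\otimes t_1^{p^i}$, and (5) is entirely analogous. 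Family (4) is the $i=0$ specialization $t_2\otimes t_1$, where Lemma \ref{lem: compute d(tkpi)} does not apply; instead I would invoke Proposition \ref{prop: BP coprod wrt v}, which gives $\bar\Delta(t_2) \equiv t_1\otimes t_1^p - v_1 b_{1,0}$ while $d(t_1) = 0$ exactly. The $v_1$-term, the Frobenius-untwisted avatar of the $p\,b_{n,i-1}$ term, is precisely what produces $-\,q_1 b_{1,0}\otimes t_1$ with a $q_1$ rather than a $q_0$. This dichotomy between $d(t_1) = 0$ and $d(t_1^{p^i}) = p\,b_{1,i-1}$ for $i \ge 1$ is the conceptual heart of the statement and explains all the index restrictions appearing in the list.

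The main obstacle, in my view, is not any single differential but proving exhaustiveness: that every remaining basis element of Table \ref{table: first and second line basis} has vanishing $d_2^{alg}$. The classes $a_0$ and $a_0^2$ are permanent cycles since $d(v_0) = 0$, and $h_0$ is one since $t_1$ is primitive. The decomposable low-index cases vanish by the Leibniz rule combined with $Ext$-relations; for instance $d_2^{alg}(h_0 h_j) = h_0\cdot q_0 b_{1,j-1}$ is zero because $a_0 h_0 = 0$, which in turn holds since $q_0\otimes t_1$ is the cobar coboundary of $q_1$ in $\Omega_{P_*}(I/I^2)$. The indecomposable exceptions, such as $a_1 h_0$, the $b_i$, and the low-index $k_0 \leftrightarrow t_2\otimes t_1^p$, must be treated directly; here I expect the differential either to vanish at the cochain level or to land in a cohomologically trivial position, certified by the May-degree obstruction argument used in the proof of Theorem \ref{thm: new adams d2 fourth line}. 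Matching each such vanishing against the absence of the corresponding class from the basis is the principal bookkeeping burden of the proof.
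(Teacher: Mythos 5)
Your proposal is correct and follows essentially the same route as the paper: the paper's own proof is a one-line reference back to the lift-and-differentiate argument of Proposition \ref{prop: compute alg d2 first example} applied to the cobar representatives, with the key inputs being Lemma \ref{lem: compute d(tkpi)} for $i\geq 1$ and the $v_1 b_{1,0}$ term of Proposition \ref{prop: BP coprod wrt v} for the exceptional case (4), exactly as you describe. Your additional use of the Leibniz rule to dispatch the decomposable families (2) and (6), and your explicit discussion of the vanishing cases, are harmless refinements of the same computation rather than a different method.
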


\begin{proof}
Analogous to Proposition \ref{prop: compute alg d2 first example}, all of the results are computed directly from the construction of the cobar complex. 
\end{proof}

Then, we can recover the $d_2^{Adams}$ results on the first two lines directly from Proposition \ref{prop: first and second line alg d2}.

\begin{thm}[Liulevicius\cite{Liulevicius_factorization}, Shimada-Yamanoshita \cite{Shimada_Yamanoshita}, Miller-Ravenel-Wilson \cite{Miller_Ravenel_Wilson}, Zhao-Wang \cite{Zhao_Wang_adams}]
\label{thm: second line adams d2}
Amongst the first and second line basis in Table \ref{table: first and second line basis}, all nontrivial Adams $d_2$ differentials can be summarized as follows. 
\begin{enumerate}
  \item $d_2^{Adams} (h_i) = a_0 b_{i-1}$, $i \geq 1$. 
  \item $d_2^{Adams} (a_0 h_i) = a_0^2 b_{i-1}$, $i \geq 1$.
  \item $d_2^{Adams} (g_i) = a_0 b_{2, i-1} h_i$, $i \geq 1$.
  \item $d_2^{Adams} (g_0) = - a_1 b_0 h_0$.
  \item $d_2^{Adams} (k_i) = a_0 b_{2, i-1} h_{i+1}$, $i \geq 1$.
  \item $d_2^{Adams} (h_i h_j) = a_0 b_{i-1} h_j - h_i a_0 b_{j-1}$, $j-2 \geq i \geq 1$.  
\end{enumerate}

\end{thm}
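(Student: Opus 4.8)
The plan is to transport each differential from its algebraic Novikov counterpart in Proposition \ref{prop: first and second line alg d2} through the transition theorem (Theorem \ref{thm: miller transition}), exactly as in the proof of Theorem \ref{thm: new adams d2 fourth line}. First I would record the dictionary between the basis of Table \ref{table: first and second line basis} and the corresponding algNSS terms furnished by Notations \ref{notation: denote E2 element by cobar} together with Theorem \ref{thm: CESS props}(d): the exterior class $a_n$ matches $q_n$ and each polynomial factor $t_1^{p^l}$ matches $h_{1,l}$. Thus $h_i$ corresponds to $t_1^{p^i}$, while $a_0 h_i$ corresponds to $q_0 \otimes t_1^{p^i}$, whose integral cobar representative is $p\,t_1^{p^i}$; similarly $g_i = h_{2,i}h_{1,i}$ corresponds to $t_2^{p^i}\otimes t_1^{p^i}$, $k_i = h_{2,i}h_{1,i+1}$ to $t_2^{p^i}\otimes t_1^{p^{i+1}}$, and $h_i h_j = h_{1,i}h_{1,j}$ to $t_1^{p^i}\otimes t_1^{p^j}$.

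With this dictionary in hand, each item is read off termwise from Proposition \ref{prop: first and second line alg d2} by replacing every $q_n$ with $a_n$ and every cobar factor $t_1^{p^l}$ with $h_{1,l}$. For instance, $d_2^{alg}(t_1^{p^i}) = q_0 b_{1,i-1}$ transports to $d_2^{Adams}(h_i) = a_0 b_{1,i-1} = a_0 b_{i-1}$, and in the same way items (2)--(6) of the proposition become items (2)--(6) of the theorem. The completeness assertion transfers as well: since the CESS collapses at $E_2$ with no nontrivial extensions (Theorem \ref{thm: CESS props}(c)), the $E_2$-page of the algNSS is identified degreewise with $Ext_{\mathcal{A}_*}$, so a basis element whose corresponding algNSS term has vanishing $d_2^{alg}$ has vanishing $d_2^{Adams}$. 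Because Proposition \ref{prop: first and second line alg d2} lists \emph{all} the nontrivial $d_2^{alg}$'s among the corresponding terms, the six families above exhaust the nontrivial $d_2^{Adams}$ on the first two lines.

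The one genuinely nonformal step, which I expect to be the main obstacle, is to confirm that each target is nonzero in $Ext_{\mathcal{A}_*}$, equivalently that the stated May leading terms survive. I would argue this through the May spectral sequence exactly as in the proof of Theorem \ref{thm: new adams d2 fourth line}: each target has an explicit $E_1$-representative, namely $a_0 b_{1,i-1}$, $a_0^2 b_{1,i-1}$, $a_0 b_{2,i-1} h_{1,i}$, $a_1 b_{1,0} h_{1,0}$, $a_0 b_{2,i-1} h_{1,i+1}$, and $a_0 b_{1,i-1} h_{1,j} - a_0 b_{1,j-1} h_{1,i}$, and it suffices to rule out that this representative is the image of a May differential $d_r \colon E_r^{s-1,t,M+r} \to E_r^{s,t,M}$. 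For each target I would enumerate the finitely many $E_1$-monomials of the same internal degree $t$ in cohomological degree $s-1$ and verify, by comparing May degrees, that every one of them has strictly smaller May filtration than the target, so no filtration-lowering May differential can hit it. The borderline instances demand the most care: the $i=1$ cases, such as the target $a_0 b_0$ of $d_2^{Adams}(h_1)$, are not among the generators of Table \ref{table: third line basis} and must be checked by hand, and in item (6) the two monomials of the two-term target share the same May filtration, so one instead checks that they survive to linearly independent classes and hence that their difference is nonzero. Since the cohomological degrees involved here are at most four, these enumerations remain short and run parallel to the fourth-line computation already carried out.
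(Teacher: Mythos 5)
Your proposal is correct and follows essentially the same route as the paper, which simply states that Theorem \ref{thm: second line adams d2} is recovered ``directly from Proposition \ref{prop: first and second line alg d2}'' via the transition result of Theorem \ref{thm: miller transition} and the dictionary of Notations \ref{notation: denote E2 element by cobar}. Your added discussion of nontriviality of the targets via May-degree comparisons is more than the paper explicitly supplies here (it only carries out that check for the fourth-line classes), but it is consistent with, and parallel to, the paper's argument for Theorem \ref{thm: new adams d2 fourth line}.
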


Similarly, we can compute $d_2^{Adams}$ for the third line basis via computing $d_2^{alg}$ of their corresponding elements.  For simplicity, we only list the nontrivial differentials here.

\begin{prop}
\label{prop: third line alg d2}
Let $p \geq 5$ be an odd prime.  Amongst the elements in the algebraic Novikov spectral sequence that corresponds to the third line basis listed in Table \ref{table: third line basis}, all nontrivial $d_2^{alg}$'s are summarized as follows.
Here, the equations hold after modding out lower May filtration terms.

\begin{enumerate}
  \item $d_2^{alg} (t_1^{p^i} \otimes t_1^{p^j} \otimes t_1^{p^k}) = q_0 b_{1,i-1} \otimes t_1^{p^j} \otimes t_1^{p^k} - t_1^{p^i} \otimes q_0 b_{1,j-1} \otimes t_1^{p^k} + t_1^{p^i} \otimes t_1^{p^j} \otimes q_0 b_{1,k-1}$, for $k-4 \geq j-2 \geq i \geq 1$.
  \item $d_2^{alg} (q_0 t_1^{p^i} \otimes t_1^{p^j}) =  q_0^2 b_{1,i-1} \otimes t_1^{p^j} - q_0^2 t_1^{p^i}  \otimes b_{1,j-1}$, for $j-2 \geq i \geq 1$.
  \item $d_2^{alg} (q_0^2 t_1^{p^i}) = q_0^3 b_{1,i-1}$, for $i \geq 1$.
  \item $d_2^{alg} (b_{1,i} \otimes t_1^{p^j}) = q_0 b_{1,i} b_{1,j-1}$, for $i \geq 0, j \geq 1, j \neq i+2$. 
  \item $d_2^{alg} (t_2^{p^i} \otimes t_1^{p^i} \otimes t_1^{p^j}) = q_0 b_{2, i-1} \otimes t_1^{p^i} \otimes t_1^{p^j}$, for $i,j \geq 1, j \neq i+2,i,i-1$.
  \item $d_2^{alg} (t_2 \otimes t_1 \otimes t_1^{p^j}) = - q_1 b_{1,0} \otimes t_1 \otimes t_1^{p^j} + t_2 \otimes t_1 \otimes q_0 b_{1, j-1}$, for $j > 0, j \neq 2$.
  \item $d_2^{alg}(q_0 t_2^{p^i} \otimes t_1^{p^i}) = q_0^2 b_{2,i-1} \otimes t_1^{p^i}$, for $i \geq 1$.
  \item $d_2^{alg} (t_2^{p^i} \otimes t_1^{p^{i+1}} \otimes t_1^{p^j}) =  q_0 b_{2, i-1} \otimes t_1^{p^{i+1}} \otimes t_1^{p^j}$, for $i,j \geq 1, j \neq i+2, i\pm 1,i$.  
  \item $d_2^{alg} (q_0 t_2^{p^{i}} \otimes t_1^{p^{i+1}}) = q_0^2 b_{2,i-1} \otimes t_1^{p^{i+1}}$, for $i \geq 1$.
  \item $d_2^{alg}(t_3^{p^{i}} \otimes t_2^{p^{i}} \otimes t_1^{p^{i}}) = q_0 b_{3, i-1} \otimes t_2^{p^{i}} \otimes t_1^{p^{i}}$, for $i \geq 1$.
  \item $d_2^{alg}(t_3 \otimes t_2 \otimes t_1) = - q_1 b_{2,0} \otimes t_2 \otimes t_1$.  
  \item $d_2^{alg} (t_2^{p^i} \otimes t_2^{p^{i+1}} \otimes t_1^{p^{i+1}}) = q_0 b_{2, i-1} \otimes t_2^{p^{i+1}} \otimes t_1^{p^{i+1}} - t_2^{p^i} \otimes q_0 b_{2, i} \otimes t_1^{p^{i+1}}$, for $i \geq 1$.
  \item $d_2^{alg} (q_1 t_2 \otimes t_1) = - q_1^2 b_{1,0} \otimes t_1$.
  \item $d_2^{alg}(t_3^{p^{i}} \otimes t_1^{p^{i+2}} \otimes t_1^{p^{i}}) = q_0 b_{3, i-1} \otimes t_1^{p^{i+2}} \otimes t_1^{p^{i}}$, for $i \geq 1$.
  \item $d_2^{alg}(t_3 \otimes t_1^{p^2} \otimes t_1) = - q_1 b_{2,0} \otimes t_1^{p^2} \otimes t_1$.
  \item $d_2^{alg}(t_3^{p^{i}} \otimes t_2^{p^{i+1}} \otimes t_1^{p^{i+2}}) = q_0 b_{3, i-1} \otimes t_2^{p^{i+1}} \otimes t_1^{p^{i+2}}$, for $i \geq 1$.
\end{enumerate}

\end{prop}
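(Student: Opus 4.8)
My plan is to treat all sixteen families by the single uniform method of Proposition \ref{prop: compute alg d2 first example}, reading each differential off its May-leading cobar monomial. Fix one family; write $\ell'$ for the displayed leading monomial (say $t_3^{p^i}\otimes t_2^{p^i}\otimes t_1^{p^i}$ for (10)) and let $x$ be the corresponding algNSS $E_2$-class, so that $\ell'$ is the May-filtration leading term of $x$. Since $x$ is a genuine cycle, there is a lift $\Tilde{x} = \ell' - \sum_r y_r$ in $\Omega_{BP_* BP}^{s,*}(BP_*)$ in which each $y_r$ is a monomial, $y_r \notin I \cdot BP_* BP^{\otimes s}$, $M(y_r) < M(\ell')$, and $d(\Tilde{x}) \equiv 0 \bmod I \cdot BP_* BP^{\otimes (s+1)}$; then $d(\Tilde{x}) \bmod I^2$ represents $d_2^{alg}(x)$, exactly as in Proposition \ref{prop: compute alg d2 first example}. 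For the families carrying a $q_0$- or $q_1$-prefix (as in (2), (3), (7), (9), (13)) one lifts with the corresponding $BP_*$-coefficient $p$ or $v_1$ and argues one filtration deeper, in $I^{k+1}/I^{k+2}$, the coefficient being carried through; the only new input is $\eta_R(v_1) \equiv v_1 + p t_1$ (Proposition \ref{prop: BP right unit wrt v}), whose correction term is of strictly lower May degree.

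I would then apply the Leibniz rule together with Lemma \ref{lem: compute d(tkpi)} (and Proposition \ref{prop: BP coprod wrt v} for any factor with outer exponent $p^0$) to $d(\ell')$ and retain its $I/I^2$-part. Differentiating the $k$-th tensor factor $t_n^{p^j}$ contributes, in that slot, $q_0 b_{n,j-1}$ when $j \geq 1$ and (as leading term) $-q_1 b_{n-1,0}$ when $j = 0$ and $n \geq 2$, while a factor $t_1$ contributes nothing since $d(t_1) = 0$. In May degree this replaces a slot of degree $2n-1$ by one of degree $1 + p(2n-1)$ (for $j \geq 1$) or $3 + p(2n-3)$ (for $j = 0$), hence raises the total May degree of the tensor by $1 + (p-1)(2n-1)$ or by $4 - 2n + p(2n-3)$ respectively. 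The surviving leading term(s) of $d_2^{alg}(x)$ are exactly the contributions realizing the maximal such increment. This reproduces every entry of the proposition: a single $q_0 b_{n,*}$ term when one factor strictly dominates (e.g. (5), (8), (10), (14), (16) and their prefixed analogues (3), (7), (9), while (4) merely carries the already-present $b_{1,i}$ along); a single $-q_1 b_{n-1,0}$ term when the dominant factor has outer exponent $p^0$ (e.g. (11), (15), and the prefixed (13)); and several terms, with the signs dictated by the cobar convention of Definition \ref{defn: cobar complex}, when distinct factors tie — the three $t_1$'s in (1), the two $t_1$'s in (2), the two $t_2$'s in (12), and in (6) the tie between the $t_2$ factor and the $t_1^{p^j}$ factor (both of increment $p$).

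The one point needing genuine care — and the step I expect to be the main obstacle — is showing that the correction terms $\sum_r y_r$ cannot contribute to the $I/I^2$-part at the top May level $M_{\max}$, so that the leading term is determined by $\ell'$ alone. This is precisely the bookkeeping of cases (a)--(c) in Proposition \ref{prop: compute alg d2 first example}, and it is \emph{not} formal, because on the integral cobar complex the differential can raise May degree: Lemma \ref{lem: compute d(tkpi)} sends $t_n^{p^j}$, of May degree $2n-1$, to the term $p\,b_{n,j-1}$ of May degree $1 + p(2n-1)$. The budget argument is nonetheless uniform. Writing $t_N^{p^k}$ for the dominant factor of $\ell'$ and using $M(b_{n,j}) = p(2n-1)$: if $y_r$ contains a factor of index $N$, the constraint $M(y_r) < M(\ell')$ forces its complementary monomial to have strictly smaller May degree than the complement of $t_N^{p^k}$ in $\ell'$, so the $b_N$-contribution of $d(y_r)$ lands strictly below $M_{\max}$; and if $y_r$ omits index $N$ entirely, every $b$-term it produces has index $n < N$ and hence May degree at most $p(2N-3)$ in the differentiated slot, again strictly below $M_{\max}$. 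Running this finite check for each of the sixteen monomials, exactly as was done for $7p+10$ in Proposition \ref{prop: compute alg d2 first example}, completes the proof.
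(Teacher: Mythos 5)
Your proposal follows exactly the paper's (implicit) argument: the paper offers no separate proof of this proposition beyond treating each case as analogous to Proposition \ref{prop: compute alg d2 first example}, and your uniform leading-term analysis via Lemma \ref{lem: compute d(tkpi)}, Proposition \ref{prop: BP coprod wrt v}, and the May-degree budget for the correction terms $y_r$ is precisely that computation carried out family by family. Your identification of which factor's $q_0 b_{n,j-1}$ (or $-q_1 b_{n-1,0}$) contribution dominates, and of the tied cases producing several summands, reproduces every entry of the statement.
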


Then, we can recover the following result directly from Proposition \ref{prop: third line alg d2}.

\begin{thm}[Wang \cite{Wang_third_line_adams}]
\label{thm: third line adams d2}
Let $p \geq 5$ be an odd prime.  Amongst the third line basis in Table \ref{table: third line basis}, all nontrivial Adams $d_2$ differentials can be summarized as follows. 
\begin{enumerate}
  \item $d_2^{Adams} (h_i h_j h_k) = a_0 b_{i-1} h_{j} h_{k} - a_0 h_{i} b_{j-1} h_{k} + a_0 h_{i} h_{j} b_{k-1}$, $k-4 \geq j-2 \geq i \geq 1$.
  \item $d_2^{Adams} (a_0 h_i h_j) = a_0^2 b_{i-1} h_j - a_0^2 h_i b_{j-1}$, $j-2 \geq i \geq 1$.
  \item $d_2^{Adams} (a_0^2 h_i) = a_0^3 b_{i-1}$, $i \geq 1$.
  \item $d_2^{Adams} (b_i h_j) = a_0 b_i b_{j-1}$, $i \geq 0, j \geq 1, j \neq i+2$.
  \item $d_2^{Adams} (g_i h_j) = a_0 b_{2, i-1} h_i h_j$, $i,j \geq 1, j \neq i+2,i,i-1$.
  \item $d_2^{Adams} (g_0 h_j) = - a_1 b_0 h_0 h_j + a_0 g_0 b_{j-1}$, $j > 0, j \neq 2$.
  \item $d_2^{Adams} (g_i a_0) = a_0^2 b_{2,i-1} h_i$, $i \geq 1$.
  \item $d_2^{Adams} (k_i h_j) = a_0 b_{2, i-1} h_{i+1} h_j$, $i,j \geq 1, j \neq i+2, i\pm 1,i$.
  \item $d_2^{Adams} (k_i a_0) = a_0^2 b_{2,i-1} h_{i+1}$, $i \geq 1$.
  \item $d_2^{Adams} (h_{3,i} g_i) = a_0 b_{3, i-1} g_i$, $i \geq 1$.
  \item $d_2^{Adams} (h_{3,0} g_0) = - a_1 b_{2,0} g_0$.
  \item $d_2^{Adams} (h_{2,i} g_{i+1}) = a_0 b_{2, i-1} g_{i+1} - a_0 h_{2,i} k_i$, $i \geq 1$. 
  \item $d_2^{Adams} (a_1 g_0) = - a_1^2 b_0 h_0$.
  \item $d_2^{Adams} (h_{3,i} h_{i+2} h_i) = a_0 b_{3, i-1} h_{i+2} h_i$, $i \geq 1$.
  \item $d_2^{Adams} (h_{3,0} h_{2} h_0) = - a_1 b_{2,0} h_{2} h_0$.
  \item $d_2^{Adams} (h_{3,i} k_{i+1}) = a_0 b_{3, i-1} k_{i+1}$, $i \geq 1$.
\end{enumerate}

\end{thm}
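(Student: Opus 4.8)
The plan is to deduce Theorem \ref{thm: third line adams d2} from the algebraic computation in Proposition \ref{prop: third line alg d2} by invoking the Miller transition principle (Theorem \ref{thm: miller transition}). The organizing observation is that the sixteen nontrivial $d_2^{alg}$ differentials listed in Proposition \ref{prop: third line alg d2} are in bijective correspondence with the sixteen families of nontrivial $d_2^{Adams}$ claimed in the theorem, and that each source/target pair translates under the dictionary of Notation \ref{notation: denote E2 element by cobar}. Since the whole computational burden has been discharged in the proposition, the work that remains for the theorem is the translation across spectral sequences and the verification that the resulting differentials are genuinely nonzero.

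First, for each third-line basis element $z$ in Table \ref{table: third line basis} I would identify the corresponding element $x$ in the algNSS $E_2$-page. This uses the isomorphism \eqref{eq: iso of E2} together with the naming convention: the MSS representative of $z$ determines its May filtration leading term, which I read as an element of the cobar complex by replacing each $h_{1,m}, h_{2,m}, h_{3,m}$ with the generators $t_1^{p^m}, t_2^{p^m}, t_3^{p^m}$, each $a_i$ with $q_i$, and each $b_{i,j}$ with the corresponding algNSS class (the two meanings of $b_{i,j}$ being identified through their common leading term, as in Remark \ref{rem: two bij}). For example $h_i h_j h_k = h_{1,i}h_{1,j}h_{1,k}$ corresponds to $t_1^{p^i}\otimes t_1^{p^j}\otimes t_1^{p^k}$, and $b_i h_j = b_{1,i} h_{1,j}$ corresponds to $b_{1,i}\otimes t_1^{p^j}$, and so on through the table. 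The fact that the May filtration of an algNSS $E_2$-term agrees with that of its Adams counterpart (established at the end of Section \ref{sec: spectral sequence}) guarantees that this reading of leading terms is well defined.

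Second, I would apply Proposition \ref{prop: third line alg d2} to compute $d_2^{alg}(x)$ modulo lower May filtration, and then translate the leading term back to the Adams $E_2$-page by the inverse dictionary $q_i \mapsto a_i$, $t_m^{p^\ell}\mapsto h_{m,\ell}$. By Theorem \ref{thm: miller transition}, this leading term is precisely the MSS representative of $d_2^{Adams}(z)$, which yields each displayed formula of the theorem; matching item $(n)$ of Proposition \ref{prop: third line alg d2} against item $(n)$ of the theorem confirms the bijection. For every remaining basis element, whose corresponding algNSS class does not appear in Proposition \ref{prop: third line alg d2}, the differential $d_2^{alg}$ vanishes, so Theorem \ref{thm: miller transition} forces $d_2^{Adams}=0$ as well.

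The point requiring genuine care — and the main obstacle — is confirming that each target is nonzero in $Ext_{\mathcal{A}_*}^{4,*}$, so that the differential is nontrivial rather than merely represented by a leading term that is itself a boundary or zero. As in the proof of Theorem \ref{thm: new adams d2 fourth line}, I would settle this through an inspection of the May spectral sequence: in the relevant internal degree I would enumerate the $E_1$-classes of strictly higher May filtration and check that none of them can support a May differential landing on the target, so that the named leading term survives to $Ext$. This nonvanishing argument must be run family by family, and the candidate sources of May differentials differ across the sixteen cases; by contrast, the algebraic translation is entirely mechanical once the dictionary is fixed.
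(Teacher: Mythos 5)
Your proposal is correct and follows essentially the same route as the paper: the paper likewise obtains Theorem \ref{thm: third line adams d2} by reading off the sixteen differentials of Proposition \ref{prop: third line alg d2} through the dictionary of Notation \ref{notation: denote E2 element by cobar} and Theorem \ref{thm: miller transition}, with the item-by-item correspondence exactly as you describe. Your added May-spectral-sequence nonvanishing check for the targets mirrors the argument the paper gives in the proof of Theorem \ref{thm: new adams d2 fourth line} and is a reasonable supplement, not a deviation.
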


\end{document}